\newcommand{\R}{\mathbb{R}}
\newcommand{\Q}{\mathbb{Q}}
\newcommand{\N}{\mathbb{N}}
\newcommand{\beq}{\begin{equation}}
\newcommand{\eeq}{\end{equation}}
\newcommand{\bea}{\begin{eqnarray}}
\newcommand{\eea}{\end{eqnarray}}
\newcommand{\bean}{\begin{eqnarray*}}
\newcommand{\eean}{\end{eqnarray*}}
\newtheorem{theorem}{Theorem}
\newtheorem{lemma}[theorem]{Lemma}
\newtheorem{pro}[theorem]{Proposition}
\newenvironment{proof}%
{\par\noindent\emph{Proof:\ }}%
{\ \hfill ~\rule{2mm}{2mm}\par\bigskip}
{\par\noindent\textbf{Remark:\ }}%
{\ \hfill \par\bigskip}
\def\mut{{{\mu}}}
\def\muu{{{\nu}}} 
\def\mud{{{\nu}}} 
\def\hd{{{h}}}
\def\hpp{\mathfrak{h}}
\def\zzz{ u }
\def\rrr{ v }
\def\ZZ{ {\tilde{u}} }
\def\RR{ {\tilde{v}} } 
\def\serUt{ \widetilde{\mathtt{U}} }
\def\serU{ \mathtt{U} }
\def\facV{ \mathtt{V} }
\def\facVt{ \widetilde{\mathtt{V}} }
\def\IPI{ \mathtt{I} }
\def\IPIt{ \widetilde{\mathtt{I}} }
\def\IPIp{ \hat{\mathtt{I}} }
\def\IPGp{ \hat{\mathtt{G}} }
\def\IPHp{ \hat{\mathtt{H}} }
\def\Rr{ \mathtt{R} }
\def\CurvI{\mathtt{S}}
\def\CurvIt{\widetilde{\mathtt{S}}}
\newcommand\M{{\mathtt{M}}}
\def\Mt{\widetilde{\mathtt{M}}}
\def\Mh{\widehat{\mathtt{M}}}
\def\betat{\widetilde{\beta}}
\def\lo{ l_0 }
\def\mo{ m_0 }
\def\no{ n_0 }
\newcolumntype{"}{@{\hskip\tabcolsep\vrule width 2pt\hskip\tabcolsep}}
\def\hlinewd#1{%
 \noalign{\ifnum0=`}\fi\hrule \@height #1 \futurelet
 \reserved@a\@xhline}
\def\t{{\mathbf t}}
\def\ttr{{\mathbf t}}
\def\tpl{{\hat{\ttr}}}
\def\nablatr{{\nabla}}
\def\nablapl{{\hat{\nabla}}}
\def\Opl{{\ell}}
 \def\Oplt{{\ell}}
 \def\Oplp{\hat{\ell}}
 \def\th{\hat{l}}
 \def\qh{\mathscr{P}}
 \def\QH{\mathcal{Q}}
 \def\cofactor{\mathscr{K}}
 \def\FP{{\mathbf P}}
 \def\FPt{\widetilde{\mathbf P}}
 \def\x{{\mathbf x}}
 \def\F{{\mathbf F}}
 \def\G{{\mathbf G}}
 \def\P{{\mathbf P}}
 \def\X{{\mathbf X}}
 \def\zero{{\mathbf 0}}
\def\tx{\widetilde{\x}}
\def\tF{{\widetilde{\mathbf F}}}
\def\FZERO{{\mathbf{D}_0}}
\def\spande{{\mathrm{Span}}}
\def\diverg{{\mathrm{div}}}
\def\Cor{{\mathrm{Cor}}}
\def\Ker{{\mathrm{Ker}}}
\def\Range{{\mathrm{Range}}}
\def\parent#1{\left( #1 \right)}
\def\llave#1{\left\{ #1 \right\}}
\def\vedo#1#2{ \begin{pmatrix} #1 \\ #2 \end{pmatrix} }
\def\vetre#1#2#3{ \begin{pmatrix} #1 \\ #2 \\ #3 \end{pmatrix} }
\def\vedot#1#2{\left( #1 , #2 \right)^{T}}
\def\vetret#1#2#3{ \left( #1 , #2 , #3 \right)^T }
\def\vedor#1#2{ \begin{pmatrix} #1 \\ \hdashline[2pt/2pt] #2 \end{pmatrix} }
\def\vetrer#1#2#3{ \begin{pmatrix} #1 \\ #2 \\ \hdashline[2pt/2pt] #3 \end{pmatrix} }
\def\fracp#1#2{{\textstyle{\frac{#1}{#2}}}}
\date{\today}
\title{ On the Integrability Problem for the Hopf-Zero singularity and its relation with the inverse Jacobi multiplier}
\author{
A. Algaba$^\dagger$, N. Fuentes$^\dagger$, E. Gamero$^\ddagger$, C. Garc\'{\i}a$^\dagger$
\\
$^\dagger$CEAFMC. Faculty of Experimental Sciences\\ University of Huelva,
Spain
\\
$^\ddagger$ Dept. Applied Mathematics II, ETSI.\\ University of
Sevilla, Spain}
\begin{document}
\bibliographystyle{plain}

\maketitle

\abstract{
In this paper we use the orbital normal form of the nondegenerate Hopf-zero singularity to obtain necessary conditions for the existence of first integrals for such singularity.
Also, we analyze the relation between the existence of first integrals and of inverse Jacobi multipliers. 
Some algorithmic procedures for determining the existence of first integrals are presented, and they are applied to some families of vector fields.}

\section{Introduction}
Let us consider an analytic three-dimensional system that undergoes a linear degeneracy corresponding to a zero and a pair of pure imaginary eigenvalues.
By translating the equilibrium point to the origin and using a linear transformation, the Hopf-zero singularity can be written as 
$$
\vetre{\dot x}{\dot y}{\dot z} = \vetre{-y}{x}{0} +\vetre{f(x,y,z)}{g(x,y,z)}{h(x,y,z)},
$$
where $f,g,h$ are analytic functions at the origin that denote the nonlinear terms.

We consider the nondegenerate Hopf-zero singularity, that arises by assuming the generic conditions $\frac{\partial^2 h }{\partial x^2} \neq 0$ or $\frac{\partial^2 h }{\partial y^2} \neq 0$.
Under this hypothesis, it is a simple matter to show that the above system can be expanded in quasi-homogeneous terms of type $\t=(1,1,2)$:
\beq\label{hopf-zero-system}
\vetre{\dot x}{\dot y}{\dot z} = \vetre{-2y}{2x}{x^2+y^2} +\vetre{F_1(x,y,z)}{G_1(x,y,z)}{H_1(x,y,z)} + \vetre{F_2(x,y,z)}{G_2(x,y,z)}{H_2(x,y,z)} +\cdots,
\eeq
where $\F_k=\vetret{F_k(x,y,z)}{G_k(x,y,z)}{H_k(x,y,z)}$ is a quasi-homogeneous vector field of type $\t$ and degree $k$.
We notice that the \emph{principal part} (the lowest-degree quasi-homogeneous term, which has degree 0) can be expressed as 
\beq\label{princpartHZ}
\F_0(\x)=\vetrer{ -2y }{ 2x }{ x^2+y^2 } = \vedor{ \X_{\hpp} }{ \hpp },
\eeq
where $\x=(x,y,z)$ and 
\beq\label{Hamilt}
\X_{\hpp}= \vedot{-\frac{\partial \hpp}{\partial y} }{ \frac{\partial \hpp}{\partial x} }=\vedot{-2y}{2x} 
\eeq
denotes the planar Hamiltonian vector field with Hamiltonian function $\hpp=x^2+y^2$.

In this paper, we study the existence of first integrals in a neighborhood of the equilibrium point at the origin for this kind of systems (recall that a first integral is a non-constant function that is constant when it is evaluated along any solution of the system).

Revealing the existence of first integrals for a given system is very useful to understand its qualitative behavior.
Namely, for a planar system the existence of a first integral determines completely its phase portrait.
For higher-dimensional systems, this can be done by obtaining a sufficient number of functionally-independent first integrals.
 
The three-dimensional center problem for the Hopf-zero singularity (that consists of determining whether there is a neigbourhod of the singularity foliated by periodic orbits, including a curve of equilibria) has been analyzed in \cite{IGa1,IGa2}.
This is equivalent to the integrability problem, that consists of determining the existence of a pair of functionally-independent first integrals. 
In the quoted works, it is shown that a Hopf-zero singularity is completely integrable if, and only if, it is orbitally equivalent to its linear part $\vetret{-y}x0$. 
Moreover, in the case of integrability, there are two functionally-independent first integrals of the form $\IPI_1=\hpp+\cdots$ and $\IPI_2=z+\cdots$ (the dots denote higher-order terms).
 
It is a simple matter to show that, in the nondegenerate Hopf-zero singularity (\ref{hopf-zero-system}) that we are considering, there are no first integrals of the form $\IPI_2=z+\cdots$.
In other words, the nondegenerate Hopf-zero singularity (\ref{hopf-zero-system}) can not be completely integrable and our analysis will be focused on detecting the existence of first integrals for such a singularity of the form $\IPI=\hpp+\cdots$.

This is still a difficult problem and there are few known satisfactory methods to solve it.
In the present paper, we use the orbital normal form for system (\ref{hopf-zero-system}) obtained in \cite{AlgHZNF} to establish necessary conditions for the existence of first integrals for the nondegenerate Hopf-zero singularity (\ref{hopf-zero-system}).
Moreover, we analyze the relation between the existence of first integrals and of inverse Jacobi multipliers. 
In the planar nilpotent case, analogous relations concerning first integrals and inverse integrating factors has been obtained in \cite{ Algaba122,AlGarGa,AGGintnilpfii17,Algaba12}.

This paper is organized as follows.
In Section \ref{Hopzero} we include definitions and results about quasi-homogeneous vector fields and the nondegenerate Hopf-zero orbital normal form, that we use in this work (their proofs can be found in \cite{AlgHZNF}). 
In Section \ref{IntegrabHZ}, the orbital normal form is used to obtain some results about the analytic integrability of this singularity.
The main result is Theorem \ref{teHzinteg}, that determines the existence of an analytical first integral for the nondegenerate Hopf-zero singularity in terms of the vanishing of some normal form coefficients. 
Moreover, an algorithmic procedure for obtaining necessary conditions for the existence of first integrals of polynomial vector fields, that is applicable under some hypothesis in the orbital normal form, is presented.
Section \ref{ExistenceJacobi} analyzes the relation between the existence of first integrals and the existence of inverse Jacobi multipliers for the nondegenerate Hopf-zero singularity. 
In particular, based on an algorithm to determine the existence of inverse Jacobi multipliers, we present a new algorithmic procedure to determine the existence of first integrals of polynomial vector fields, that is applicable in all the cases.
Finally, in Section \ref{ParticCases} we apply the results to a couple of three-parameter families of vector fields, where we find all the cases of existence of analytical first integrals.

\section{The Hopf-zero orbital normal form}
\label{Hopzero}

In this section, we collect results from \cite{AlgHZNF} that will be used along this paper. 
Among them, the main result is Theorem \ref{NFHopf-Zero-}, where we present the orbital normal form for system (\ref{hopf-zero-system}).
Before we state it, we introduce some definitions and results.

We say that a scalar function $f$ of $n$ variables is quasi-homogeneous of type $\t=(t_1, \dots, t_n)\in\N^n$ and degree $k$ if $f(\epsilon^{t_1} x_1, \dots , \epsilon^{t_n} x_n )= \epsilon^k f(x_1, \dots, x_n)$.
A vector field $\F=(F_1, \dots, F_n)^T$ is quasi-homogeneous of type $\t$ and degree $k$ if $F_j\in \qh^{\t}_{k+t_j}$ for $j=1, \dots , n$.
The vector spaces of quasi-homogeneous functions and vector fields of type $\t$ and degree $k$ are denoted, respectively, by $\qh^{\t}_k$ and $\QH_k^{\t}$.

In this paper, we use the type $\ttr=(1,1,2)$ (for functions and vector fields depending on three variables), as well as the type $\tpl=(1,1)$ (, that appear when dealing with functions and vector fields depending on two variables.
For instance, we have $\F_0\in\QH_0^{\ttr}$, $\X_\hpp\in\QH_0^{\tpl}$ and $\hpp\in\qh_{2}^{\tpl}$.

A conservative-dissipative decomposition of quasi-homogeneous planar vector fields has been used in \cite{AlgabaNonlinearity09} in the study of the integrability problem.
In the specific case that we are considering, this decomposition reads as follows.

\begin{pro}\label{con-dis}
Let us consider $\P_k\in\QH_{k}^{\tpl}$ and denote $\FZERO=\vedot{x}{y} \in \QH_{0}^{\tpl}$.
Then, there exist unique quasi-homogeneous polynomials $\hd_{k+2}\in\qh_{k+2}^{\tpl}$ and $\mud_k \in \qh_{k}^{\tpl}$ such that:
\beq\label{condis}
\P_k=\X_{\hd_{k+2}} + \mud_k \, \FZERO.
\eeq
Moreover, $\hd_{k+2} = \frac{1}{k+2}\FZERO\wedge\P_{k}$ and $\mud_k = \frac{1}{k+2}\diverg(\P_{k})$.
\end{pro}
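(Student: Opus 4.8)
The plan is to decompose a given $\P_k \in \QH_k^{\tpl}$ into the sum of a Hamiltonian part and a multiple of the Euler-like field $\FZERO$, and to pin down the coefficients by two natural bilinear pairings: the wedge product $\FZERO \wedge \P_k$ and the divergence $\diverg(\P_k)$. First I would recall that for a planar vector field $\P_k = (P,Q)^T$, the wedge $\FZERO \wedge \P_k = xQ - yP$ and $\diverg(\P_k) = \partial_x P + \partial_y Q$ are both quasi-homogeneous scalar functions; a degree count (using $\t=(1,1)$ and that $\FZERO$ has degree $0$, while $\hd_{k+2}$ must have degree $k+2$ so that $\X_{\hd_{k+2}}$ has degree $k$) shows $\FZERO \wedge \P_k \in \qh_{k+2}^{\tpl}$ and $\diverg(\P_k) \in \qh_k^{\tpl}$, consistent with the claimed formulas.

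Next I would verify the formulas by substituting the ansatz (\ref{condis}) into the two operators. For the wedge: $\FZERO \wedge \X_{\hd_{k+2}} = x\,\partial_x \hd_{k+2} + y\,\partial_y \hd_{k+2}$, which by the Euler identity for quasi-homogeneous functions of type $(1,1)$ and degree $k+2$ equals $(k+2)\,\hd_{k+2}$; and $\FZERO \wedge (\mud_k\,\FZERO) = 0$ since the wedge is alternating. This yields $\hd_{k+2} = \frac{1}{k+2}\,\FZERO \wedge \P_k$. For the divergence: $\diverg(\X_{\hd_{k+2}}) = \partial_x(-\partial_y \hd_{k+2}) + \partial_y(\partial_x \hd_{k+2}) = 0$, and $\diverg(\mud_k\,\FZERO) = \diverg(\mud_k\,(x,y)^T) = x\,\partial_x \mud_k + y\,\partial_y \mud_k + 2\mud_k = (k+2)\,\mud_k$, again by the Euler identity applied to $\mud_k \in \qh_k^{\tpl}$. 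Hence $\mud_k = \frac{1}{k+2}\,\diverg(\P_k)$.

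These computations simultaneously establish existence (define $\hd_{k+2}$ and $\mud_k$ by the displayed formulas and check that (\ref{condis}) then holds) and uniqueness (any valid decomposition must satisfy those formulas, hence the two polynomials are determined). For existence I still need to confirm that the residual $\P_k - \mud_k\,\FZERO$ is genuinely a Hamiltonian field with Hamiltonian $\hd_{k+2}$, i.e.\ that it is divergence-free — but this is immediate, since by construction $\diverg(\P_k - \mud_k\,\FZERO) = \diverg(\P_k) - (k+2)\mud_k = 0$, and a polynomial divergence-free planar vector field of the appropriate degree is globally Hamiltonian with the Hamiltonian recovered (up to an additive constant, killed here by quasi-homogeneity of positive degree) via the line-integral formula, which reproduces $\frac{1}{k+2}\FZERO\wedge\P_k$.

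The only genuinely delicate point is the division by $k+2$: the argument needs $k+2 \neq 0$, which holds since we work with $k \in \Nzero$ (or at worst $k \geq -1$, where $k=-2$ is excluded by the degree conventions for $\QH_k^{\tpl}$ in this paper). I would state this hypothesis explicitly. Everything else is a routine application of the Euler identity for quasi-homogeneous functions and the elementary fact that $\X_{(\cdot)}$ has zero divergence, so the proof is short once the degree bookkeeping is set up correctly.
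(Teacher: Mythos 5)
Your proposal is correct: applying the wedge with $\FZERO$ and the divergence to the ansatz (\ref{condis}), together with the Euler identity for type $(1,1)$ quasi-homogeneous polynomials and $\diverg(\X_{\hd_{k+2}})=0$, pins down both coefficients (uniqueness), and the radial line-integral recovery of the Hamiltonian of the divergence-free remainder gives existence; the division by $k+2$ is harmless since $k\geq 0$ here. Note that the paper itself states Proposition \ref{con-dis} without proof, referring to \cite{AlgabaNonlinearity09} (see also \cite{AlgHZNF}), and your argument is precisely the standard one behind the displayed formulas $\hd_{k+2}=\frac{1}{k+2}\FZERO\wedge\P_k$ and $\mud_k=\frac{1}{k+2}\diverg(\P_k)$.
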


In the above proposition, we have introduced the wedge product of two planar vector fields
$\F=(P,Q)^{T} $, $\G=(R,S)^{T} $, defined by $ \F \wedge \G = P \, S - Q \, R $ (see \cite{Guckenheimer83b}) and the divergence $\diverg\parent{ \F } = \frac{\partial P}{\partial x}+\frac{\partial Q}{\partial y}$.
We notice that, if $\F\in \QH_{k}^\tpl$ and $\G \in \QH_{l}^\tpl$, then $ \F \wedge \G \in \qh_{k+l+2}^{\tpl}$ and 
$\diverg\parent{ \F } \in\qh_k^\tpl$.

Next, we present the orbital normal form for system (\ref{hopf-zero-system}) obtained in \cite{AlgHZNF}.
It determines how much system (\ref{hopf-zero-system}) can be simplified by means of an infinite sequence of time-reparametrizations and near-identity coordinate transformations.
In fact, the orbital normal form presented is formal, which indicates that we will not discuss matters of convergence.

\begin{theorem}\label{NFHopf-Zero-}
A formal normal form under orbital equivalence for system (\ref{hopf-zero-system}) is 
\beq\label{FNHZEquiv}
\dot{\x} =\G(\x)=\F_0(\x) + \vedor{ G_1(z)\FZERO }{ G_2(z) } ,
\eeq
where $G_1(z)= {\sum_{k \geq 1}^{\infty}} a_{k}z^{k}$ and $G_2(z)= {\sum_{k \geq 1}^{\infty}} b_{k}z^{k+1}$.
\end{theorem}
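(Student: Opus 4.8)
The plan is to construct the orbital normal form by the standard iterative scheme: at each degree $k\geq 1$, we use a near-identity coordinate change generated by a quasi-homogeneous vector field together with a quasi-homogeneous time-reparametrization, and we show that the pair of these two infinitesimal operations can remove everything except the displayed terms. The key algebraic object is the homological operator acting on $\QH_k^{\ttr}$, namely $\P_k \mapsto [\F_0,\P_k] + \mu_k\,\F_0$, where $[\cdot,\cdot]$ is the Lie bracket of vector fields (coming from the coordinate change) and $\mu_k\in\qh_k^{\ttr}$ is the scalar generating the time-reparametrization (acting by $\F_0 \mapsto \F_0 + \mu_k\F_0$ at leading order). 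So the first step is to write down this operator explicitly and identify its range inside $\QH_k^{\ttr}$; the complement of the range will be a chosen set of normal form representatives.

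Second, I would exploit the block structure coming from \eqref{princpartHZ}: decompose any $\P_k\in\QH_k^{\ttr}$ as $\P_k = \vedor{\mathbf{p}_k}{q_k}$ with $\mathbf{p}_k\in\QH_k^{\tpl}$ (depending on $x,y,z$) and $q_k\in\qh_{k+2}^{\tpl}$, and further decompose the planar part $\mathbf{p}_k$ fiberwise in $z$ using Proposition \ref{con-dis}: for each power of $z$, write the coefficient as $\X_{h}+\mu\,\FZERO$. The action of $[\F_0,\cdot]$ then splits into the action of the rotation $\X_\hpp$ (an $\mathfrak{sl}_2$-type / semisimple piece whose kernel consists of functions of $x^2+y^2$ and $z$) and the action of the $z$-component $\hpp=x^2+y^2$ (a nilpotent-like shift coupling the planar and scalar blocks). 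The rotation part kills all angular dependence, leaving only terms built from $\rho:=x^2+y^2$ and $z$; since $\rho\in\qh_2^{\tpl}$, after also accounting for the weights, the surviving planar terms are of the form $(\text{power of }z)\cdot\FFt$ where $\FFt$ is $\FZERO$ or $\X_\rho$, and the surviving scalar terms are powers of $z$ times $1$.

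Third — and this is where the conservative–dissipative decomposition pays off — I would show that the $\X_\rho$-type planar terms and the extra scalar freedom can be absorbed: the $\X_{\rho^{j}}$ contributions are removed by the bracket with $\F_0$ (they lie in the range of the homological operator restricted to the conservative sector), and the time-reparametrization $\mu_k\,\F_0$ is exactly what is needed to match the dissipative planar term against the $z$-component coupling, so that one is left with a single scalar coefficient at each order feeding the $\FZERO$-block (giving $G_1(z)=\sum_{k\geq1}a_k z^k$) and a single coefficient feeding the $z$-block. Tracking the degree bookkeeping — a degree-$k$ quasi-homogeneous scalar of type $(1,1,2)$ that depends only on $z$ is a multiple of $z^{k/2}$, so the $\FZERO$-block entry at weight considerations yields $a_k z^k$ and the third-component entry yields $b_k z^{k+1}$ — produces the stated form of $G_1,G_2$; I would double-check the indexing here, since the degree of $\FZERO$ is $0$ and that of the scalar slot carries the $t_3=2$ weight shift, which is precisely the reason the two series have different powers of $z$.

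The main obstacle is the simultaneous handling of the two infinitesimal generators (coordinate change and time rescaling) so as to be sure nothing is double-counted and that the claimed representatives are genuinely irreducible: one must verify that the chosen normal form terms lie in a true complement of the combined homological range, i.e. that no further term of the form $z^k\FZERO$ or $z^{k+1}$ in the $z$-slot can be removed by any admissible pair $(\P_k,\mu_k)$. This is a linear-algebra transversality check at each degree, and it is the crux of why the normal form is exactly \eqref{FNHZEquiv} and not smaller; the rest is the routine induction on $k$ propagating the reduction to all orders, which — as remarked in the text — is carried out only formally.
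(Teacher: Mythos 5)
You should note first that the paper itself does not prove Theorem~\ref{NFHopf-Zero-}: it is quoted from \cite{AlgHZNF}, so there is no in-paper argument to compare with, and your proposal has to stand on its own. As a plan it follows the expected route (the combined homological operator $(\P_k,\mu_k)\mapsto[\F_0,\P_k]+\mu_k\F_0$ for near-identity changes plus reparametrizations, the block splitting suggested by (\ref{princpartHZ}), and the conservative--dissipative decomposition of Proposition~\ref{con-dis}), and your degree bookkeeping for why the two series carry $z^k$ and $z^{k+1}$ is right.

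The genuine gap is that the proposal stops exactly where the theorem lives. Everything hinges on identifying the range of the combined operator inside $\QH_k^{\ttr}$ and exhibiting a complement spanned precisely by the $z^{j}\FZERO$-block and the $z^{j+1}$ third-component terms; you assert this (``the $\X_{\rho^j}$ contributions are removed by the bracket with $\F_0$'', ``the time-reparametrization is exactly what is needed to match the dissipative planar term against the $z$-component coupling'') but never verify it. Concretely, one must show that all non rotation-invariant terms lie in the range, that among the rotation-invariant ones every coefficient involving $\rho=x^2+y^2$ (terms $\rho^i z^{j}\FZERO$ and $\rho^i z^{j}\X_{\hpp}$ in the planar block, $\rho^i z^{j}$ in the scalar slot, $i\ge 1$) and every conservative piece can be absorbed by an admissible pair $(\P_k,\mu_k)$, and that all odd-degree quasi-homogeneous terms can be removed entirely; none of this is done. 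Note also that $\F_0$ is not semisimple: the degree-zero field couples the planar rotation to the scalar slot through $\hpp$, so one cannot argue ``rotation kills angular dependence'' and then treat the planar and scalar blocks independently --- the operator $[\F_0,\cdot]$ mixes them, and computing its cokernel (the source of facts such as $\Cor(\Opl_k)=\spande\llave{z^{k/2}}$, quoted later in the paper from \cite{AlgHZNF}) is the actual work. By contrast, the ``irreducibility'' worry in your last paragraph is not needed for the statement as formulated: any form reachable by the transformations is \emph{a} formal orbital normal form, so what must be proved is exactly the removability computation you left as a ``transversality check''. Until that linear analysis is carried out degree by degree, the proposal is a programme, not a proof.
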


Observe that the $0$th-degree quasi-homogeneous term of the orbital normal form (\ref{FNHZEquiv}) agrees with those of system (\ref{hopf-zero-system}).
Moreover, the $k$th-degree quasi-homogeneous term of the orbital normal form (\ref{FNHZEquiv}) is
$$ 
\G_k(\x)=\vedor{ a_{k}z^{k}\FZERO }{ b_{k}z^{k+1} } \in\QH_k^\ttr.
$$ 

In the rest of this paper we use the orbital normal form (\ref{FNHZEquiv}) to study the integrability problem for the nondegenerate Hopf-zero singularity.
The first nonzero term in the Taylor expansions of functions $G_1$ and $G_2$ play an outstanding role in this analysis.
Therefore, let us denote
\bea
\nonumber
\lo&:=&\min\llave{l\in\N: a_l\neq 0},\\
\label{l0m0}
\mo&:=&\min\llave{m\in\N: b_m\neq 0}.
\eea
We notice that $G_1(\zzz )\equiv 0$ if, and only if, $\lo=+\infty$, and $G_2(\zzz )\equiv 0$ if, and only if, $\mo=+\infty$.

\section{The integrability problem for the Hopf-zero singularity}
\label{IntegrabHZ}

In this subsection, we show that the orbital normal form (\ref{FNHZEquiv}) is useful in the analysis of the integrability problem (consisting of determining the existence of a first integral) for the nondegenerate Hopf-zero singularity (\ref{hopf-zero-system}).

Recall that a function $\IPI$ is called a first integral for system (\ref{hopf-zero-system}) if $\IPI$ is constant when it is evaluated along any solution of the system.
If $\IPI$ is a ${\cal{C}}^1$ function, using the chain rule, this means that $\nablatr \IPI \cdot \F = 0$.

Our first result states that the analysis of the integrability problem for analytic systems can be reduced to the formal context through formal diffeomorphisms.

\begin{pro}\label{intanaformal} 
Let us consider the system $\dot \x=\F(\x)$, where $\F$ is an analytic vector field and the transformation $\x=\Phi(\tx)$, where $\Phi$ is a formal diffeomorphism. 
Then, system $\dot \x=\F(\x)$ admits an analytical first integral if, and only if, the transformed system $\dot \tx=\tF(\tx)$  admits a formal first integral.
\end{pro}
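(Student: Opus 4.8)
The plan is to exploit the elementary fact that a formal diffeomorphism sends formal first integrals to formal first integrals (by the chain rule, the pullback of a first integral is again a first integral), so the only substantive content is the comparison between \emph{analytic} and \emph{formal} first integrals of the \emph{same} analytic system $\dot\x=\F(\x)$. Concretely, I would reduce Proposition \ref{intanaformal} to the following assertion: an analytic vector field $\F$ with $\F(\zero)=\zero$ admits an analytic first integral if and only if it admits a formal first integral. One implication is trivial (an analytic first integral is in particular a formal one). For the converse, I would invoke the classical result — essentially due to Mattei and Moussu, and used repeatedly in this circle of integrability papers — that if an analytic (or germ of analytic) vector field near a singular point possesses a formal first integral, then it possesses an analytic one. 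The standard route is: a formal first integral $\hat\IPI$ defines a formal foliation; its (formal) level sets, together with an Artin-type approximation / Mattei--Moussu argument, produce an analytic first integral with the same (reduced) zero set of its differential, hence an analytic first integral of $\F$.

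With that black box in hand, the argument is short. First I would write $\x=\Phi(\tx)$ with $\Phi$ a formal diffeomorphism fixing the origin, and note the transformed field is $\tF(\tx)=\left(D\Phi(\tx)\right)^{-1}\F(\Phi(\tx))$, which is a \emph{formal} vector field (generally not analytic). Suppose $\dot\x=\F(\x)$ has an analytic first integral $\IPI$; then $\IPI\circ\Phi$ is a formal power series and $\nablatr(\IPI\circ\Phi)\cdot\tF = \left(\left(\nablatr\IPI\right)\circ\Phi\right)\cdot D\Phi\cdot\tF = \left(\left(\nablatr\IPI\right)\circ\Phi\right)\cdot\F(\Phi) = \left(\nablatr\IPI\cdot\F\right)\circ\Phi = 0$, so $\IPI\circ\Phi$ is a formal first integral of $\tF$, and it is non-constant because $\Phi$ is invertible. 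Conversely, suppose $\dot\tx=\tF(\tx)$ has a formal first integral $\hat\JM$. Then $\hat\IPI:=\hat\JM\circ\Phi^{-1}$ is a formal series (here $\Phi^{-1}$ is again a formal diffeomorphism), and the same chain-rule computation shows $\nablatr\hat\IPI\cdot\F = 0$, i.e.\ $\F$ has a \emph{formal} first integral. Now apply the Mattei--Moussu-type result quoted above to the analytic field $\F$: it follows that $\F$ admits an \emph{analytic} first integral. This closes the equivalence.

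The one point that needs care — and which I expect to be the main obstacle, or at least the only non-formal ingredient — is precisely the passage from a formal to an analytic first integral for the analytic field $\F$. This is not automatic for arbitrary vector fields without some hypothesis, but it does hold in the settings relevant here (germs of holomorphic/analytic vector fields at a singularity), and it is the standard tool underpinning reductions of this type in the literature on the integrability problem; I would cite it accordingly. Everything else is bookkeeping with the chain rule and the observation that composition with a formal diffeomorphism is a bijection on the ring of formal power series preserving the property of being non-constant (equivalently, of having a nonzero differential somewhere, or of not lying in the constants). I would also remark, for completeness, that the statement is symmetric: one may equally start from $\tF$ being the analytic field, since the roles of $\F$ and $\tF$ are interchangeable once one notes that in applications (such as the orbital normal form \eqref{FNHZEquiv}) it is the \emph{original} field that is analytic while the normal form is only formal, which is exactly the case the proposition is designed to handle.
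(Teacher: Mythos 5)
Your proposal is correct and follows essentially the same route as the paper: the trivial direction is the pullback $\IPI\circ\Phi$, the converse pushes the formal first integral through $\Phi^{-1}$ to get a formal first integral of the analytic field $\F$, and then the Mattei--Moussu theorem (Theorem A of \cite{MatteiMoussu80}, which the paper applies by composing with a formal scalar function $\th$ with $\th(0)=0$, $\th'(0)=1$) upgrades it to an analytic first integral. The only difference is presentational: you treat the formal-to-analytic step as a quoted black box, while the paper states the precise form of the Mattei--Moussu conclusion it uses.
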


\begin{proof} 
The necessary condition is trivial, because if $\IPI$ is an analytical first integral for system $\dot\x=\F(\x)$, then $\IPIt:=\IPI\circ\Phi$ is a formal first integral for system $\dot\tx={\tF}(\tx)$.

To prove the sufficient condition, let us denote by $\IPIt$ a formal first integral of system $\dot\tx={\tF}(\tx)$.
Then, $\IPIp=\IPIt\circ\Phi^{-1}$ is a formal first integral of system $\dot\x=\F(\x)$.
From Theorem A of \cite{MatteiMoussu80}, there exists a formal scalar function $\th$ such that $\th(0)=0$, ${\th}\, '(0)=1$, such that $\IPI=\th\circ\IPIp$ is an analytical first integral for system $\dot\x=\F(\x)$.
\end{proof}

We notice that the orbital normal form (\ref{FNHZEquiv}), as well as those obtained in \cite{CWY2003,CWY2005}, is invariant under rotations. 
Hence, the first integrals depend on $x^2+y^2$ and $z$.
Next result uses the orbital normal form (\ref{FNHZEquiv}) to determine the existence of a formal first integral for the nondegenerate Hopf-zero singularity by reducing it to a nilpotent singularity.
If we use instead the orbital normal form given in \cite{CWY2003,CWY2005}, then an integrability problem for a planar system with null linear part arises, which is more difficult to solve.

\begin{pro}\label{proHZTB} 
The orbital normal form (\ref{FNHZEquiv}) admits a formal first integral if, and only if, the planar system
\bea\label{FNparecidoTakens}
\nonumber{\dot{\zzz}}&=&{\rrr+G_2(\zzz)},\\
{\dot{\rrr}}&=&{2 \rrr G_1(\zzz)},
\eea
is formally integrable.
\end{pro}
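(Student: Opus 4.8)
The plan is to exploit the rotational symmetry of the normal form (\ref{FNHZEquiv}) to pass to invariant coordinates and then recognize the resulting quotient system as (a rescaled version of) (\ref{FNparecidoTakens}). First I would introduce the invariants $\zzz = z$ and $\rrr = x^2 + y^2 = \hpp$ of the rotation action generated by $\X_\hpp$. Since the normal form is invariant under rotations, its flow preserves the fibration by the level sets of the map $(x,y,z)\mapsto(\zzz,\rrr)$, so a function of the form $\IPI = \IPI(\rrr,\zzz)$ is a first integral of (\ref{FNHZEquiv}) if and only if $\IPI(\rrr,\zzz)$ is a first integral of the induced planar system in $(\zzz,\rrr)$. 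Concretely, along solutions of (\ref{FNHZEquiv}) one computes
\beq\label{rrr-dot}
\dot\rrr = 2x\dot x + 2y\dot y = 2(x^2+y^2)G_1(z) = 2\rrr\,G_1(\zzz),
\eeq
\beq\label{zzz-dot}
\dot\zzz = \dot z = x^2+y^2 + G_2(z) = \rrr + G_2(\zzz),
\eeq
which is exactly system (\ref{FNparecidoTakens}). Thus if (\ref{FNparecidoTakens}) has a formal first integral $\IPI(\zzz,\rrr)$, then $\IPI(z,x^2+y^2)$ is a formal first integral of (\ref{FNHZEquiv}), giving the ``if'' direction immediately.

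For the converse, I would argue that every formal first integral of (\ref{FNHZEquiv}) can be replaced by one depending only on the invariants $\rrr$ and $\zzz$. The key point, which must be justified carefully, is that averaging a formal first integral over the circle action yields again a formal first integral, and the averaged object is a formal power series in $x^2+y^2$ and $z$; moreover this averaged series is nonconstant whenever the original one is. This is where the remark already recorded in the paper---that for a rotation-invariant system the first integrals depend on $x^2+y^2$ and $z$---does the real work, but I would want to supply the argument: write a formal first integral $\IPI = \sum \IPI_k$ in quasi-homogeneous components, use that each $\IPI_k$ is then separately a first integral of the rotation-invariant $\G$ (because the grading is rotation-compatible), and observe that the rotation $\X_\hpp$ acting on the finite-dimensional space $\qh_k^\ttr$ forces any $\X_\hpp$-invariant element, hence any common first integral, to lie in the subalgebra generated by $\hpp$ and $z$. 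Having reduced to $\IPI = \IPI(\rrr,\zzz)$, equations (\ref{rrr-dot})--(\ref{zzz-dot}) show $\IPI$ is a formal first integral of (\ref{FNparecidoTakens}), and nonconstancy is preserved under the substitution $\rrr = x^2+y^2$.

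The main obstacle I anticipate is the converse direction: ensuring that the reduction ``first integral of (\ref{FNHZEquiv}) $\Rightarrow$ first integral in the invariants $\rrr,\zzz$'' is airtight at the formal level and, crucially, that nonconstancy is not lost. One has to rule out the degenerate possibility that every rotation-invariant formal series that is constant along (\ref{FNparecidoTakens}) is in fact a constant, i.e. one must check that a nonconstant first integral of (\ref{FNHZEquiv}) cannot become constant after averaging. I would handle this by looking at the lowest-degree nonconstant quasi-homogeneous term of the first integral and tracking it through the averaging operator, which on $\qh_k^\ttr$ is a projection onto the $\X_\hpp$-invariant subspace and in particular fixes any power of $\hpp$ or $z$; since the lowest-degree term is already a first integral of $\G$ it is $\X_\hpp$-invariant, hence fixed by the projection, hence survives. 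The remaining bookkeeping---matching the two series $G_1,G_2$ on both sides and verifying formal integrability is equivalent in the two pictures---is routine once this symmetry-reduction step is in place.
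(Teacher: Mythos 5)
Your reduction is essentially the paper's own proof: the paper passes to cylindrical coordinates $x=\rho\sin\theta$, $y=\rho\cos\theta$, $z=\zzz$ with the singular change $\rrr=\rho^2$, obtains precisely your planar system together with the decoupled equation $\dot\theta=2$, and then discards the azimuthal component, invoking (as you do) the rotational invariance of (\ref{FNHZEquiv}) so that first integrals may be taken as functions of $x^2+y^2$ and $z$. Your computation of $\dot\rrr=2\rrr G_1(\zzz)$, $\dot\zzz=\rrr+G_2(\zzz)$ and the ``if'' direction are correct.

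The converse, which you rightly identify as the delicate point, is however justified with a false intermediate claim: the quasi-homogeneous components $\IPI_k$ of a formal first integral of $\G$ are \emph{not} individually first integrals of $\G$, because $\G=\F_0+\G_1+\G_2+\cdots$ mixes degrees, and the degree-$k$ part of $\nablatr \IPI\cdot\G=0$ reads $\nablatr \IPI_k\cdot\F_0+\sum_{j<k}\nablatr \IPI_j\cdot\G_{k-j}=0$. This already fails inside the normal form itself: in case \textbf{(b.1)} the series $\hpp-2\int_0^{z}G_1(\xi)\,d\xi$ is a first integral, yet its lowest component $\hpp$ satisfies $\nablatr \hpp\cdot\G=2G_1(z)(x^2+y^2)\not\equiv 0$. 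Hence your nonconstancy argument (``the lowest-degree term is already a first integral of $\G$, hence $\X_\hpp$-invariant'') does not stand as written. What is true is only that the lowest nonconstant component $\IPI_m$ annihilates the principal part, $\Opl_m(\IPI_m)=\nablatr \IPI_m\cdot\F_0=0$; to conclude that it is rotation-invariant you must use the kernel description $\Ker(\Opl_m)=\spande\llave{\hpp^{m/2}}$ for $m$ even and $\Ker(\Opl_m)=\llave{0}$ for $m$ odd (from \cite{AlgHZNF}, the same fact used later in Lemma \ref{algoritmoIPI}). With that repair, $\IPI_m=c\,\hpp^{m/2}$ is fixed by the circle average, the averaged integral is a nonconstant rotation-invariant formal series, hence a formal series in $\hpp$ and $z$, and your symmetry-reduction argument closes the converse, recovering exactly the content of the paper's remark preceding the proposition and its one-line ``remove the azimuthal component''.
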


\begin{proof} 
Let us consider cylindrical coordinates $x=\rho\sin(\theta)$, $y=\rho\cos(\theta)$, $z=\zzz $, and the singular change $\rrr =\rho^2$.
Then, the normal form (\ref{FNHZEquiv}) becomes:
\bean 
\nonumber
{\dot{\zzz }}&=&{\rrr +G_2(\zzz )},\\
{\dot{\rrr }}&=&{2 \rrr G_1(\zzz )},\\
\nonumber
\dot{\theta}&=&2.
\eean
It is enough to remove the azimuthal component to complete the proof.
\end{proof}

We denote the vector field corresponding to planar system (\ref{FNparecidoTakens}) by 
\beq\label{Pe}
\FP(u,v)={\vedo{\rrr +G_2(\zzz )}{2 \rrr G_1(\zzz )}}.
\eeq

Next result provides a necessary condition for the existence of analytical first integrals for system (\ref{hopf-zero-system}), which determines the structure of the quasi-homogeneous normal form in case of existence of analytical first integrals.

\begin{pro}\label{proCNintHZ} 
Let us assume that system (\ref{hopf-zero-system}) admits an analytical first integral. 
Then, its formal orbital normal form (\ref{FNHZEquiv}) is given in one of the following items:
\begin{description}
\item[(a)] 
$\dot{\x}=\F_0(\x)$.
\item[(b)] 
$\dot{\x}=\F_0(\x)+\F_s(\x)+\cdots$, where $s\in\N$ and $\F_s\in\QH_s^{\ttr}$ is one of the following vector fields:
\begin{description}
\item[(b.1)] 
$\F_s(\x) = \vedor{ a_{\lo} z ^{\lo}\FZERO }{ 0 }\in\QH_s^\ttr$, where $s=2\lo$ and $a_{\lo}\in\R\setminus\llave{0}$.
\item[(b.2)] 
$\F_s(\x) = \vedor{ \zero }{ b_{\mo} z ^{\mo+1} }\in\QH_s^\ttr$, where $s=2\mo$ and $b_{\mo}\in\R\setminus\llave{0}$.
\item[(b.3)] 
$\F_s(\x) = \vedor{ a_{\mo} z ^{\mo} \FZERO }{ b_{\mo} z ^{\mo+1} }\in\QH_s^\ttr$, where $s=2\mo$ and $a_{\mo},b_{\mo}\in\R\setminus\llave{0}$ satisfy
$$
2n_1a_{\mo}+(\mo+1)n_2b_{\mo}=0,
$$ 
for some $n_1,n_2\in\N$ coprime (i.e., their greatest common divisor is 1).
\end{description}
\end{description}
\end{pro}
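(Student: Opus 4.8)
\noindent The plan is to move the whole question to the planar vector field $\FP$ of (\ref{Pe}) and then to read off the structure of its leading quasi-homogeneous part.

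\emph{Step 1: reduction to the planar system.} A formal first integral is insensitive to a time reparametrization, and, by Proposition~\ref{intanaformal}, the existence of a first integral survives the passage from the analytic to the formal setting through a formal diffeomorphism. Since the orbital normal form (\ref{FNHZEquiv}) is obtained from (\ref{hopf-zero-system}) by a formal near-identity coordinate change followed by a formal time rescaling, system (\ref{hopf-zero-system}) admits an analytic first integral if, and only if, (\ref{FNHZEquiv}) admits a formal one; by Proposition~\ref{proHZTB} this is in turn equivalent to the formal integrability of $\FP(\zzz,\rrr)=(\rrr+G_2(\zzz),\,2\rrr G_1(\zzz))^{T}$. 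From now on we assume $\FP$ is formally integrable.

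\emph{Step 2: bookkeeping of the leading term of (\ref{FNHZEquiv}).} If $\lo=\mo=+\infty$ then $G_1\equiv G_2\equiv 0$, so (\ref{FNHZEquiv}) is $\dot\x=\F_0(\x)$: item (a). Otherwise the lowest-degree nonzero quasi-homogeneous term of (\ref{FNHZEquiv}) is $\G_{\min\{\lo,\mo\}}$, of degree $s=2\min\{\lo,\mo\}$. If $\lo<\mo$ then $b_{\lo}=0$ and this term is $(a_{\lo}\zzz^{\lo}\FZERO,\,0)^{T}$ with $a_{\lo}\ne 0$, which is item (b.1); if $\mo<\lo$ then $a_{\mo}=0$ and it is $(0,\,b_{\mo}\zzz^{\mo+1})^{T}$ with $b_{\mo}\ne 0$, which is item (b.2). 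The only substantial case is $\lo=\mo$, where the term is $(a_{\mo}\zzz^{\mo}\FZERO,\,b_{\mo}\zzz^{\mo+1})^{T}$ with $a_{\mo},b_{\mo}\ne 0$, and one must establish the resonance relation of item (b.3).

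\emph{Step 3: the case $\lo=\mo$.} I would assign to $\zzz$ weight $1$ and to $\rrr$ weight $\mo+1$ --- the unique normalized choice making the term $\rrr\,\partial_{\zzz}$ (present in $\dot\zzz$) and the leading term $\zzz^{\mo}\rrr\,\partial_{\rrr}$ of $2\rrr G_1(\zzz)$ share the same quasi-homogeneous degree. Then the leading quasi-homogeneous part of $\FP$ (degree $\mo$) is $\FP_{\mo}=(\rrr+b_{\mo}\zzz^{\mo+1},\,2a_{\mo}\zzz^{\mo}\rrr)^{T}$. If $\IPI$ is a formal first integral, its lowest-degree quasi-homogeneous part $\IPI_{*}$ is a nonzero quasi-homogeneous --- hence polynomial --- first integral of $\FP_{\mo}$, so every irreducible quasi-homogeneous factor of $\IPI_{*}$ defines an invariant quasi-homogeneous curve of $\FP_{\mo}$. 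A direct check shows that $\{\zzz=0\}$ is not invariant and that a curve $\{\rrr=\alpha\zzz^{\mo+1}\}$ is invariant exactly for $\alpha=0$ and for $\alpha=c^{\ast}:=\tfrac{2a_{\mo}-(\mo+1)b_{\mo}}{\mo+1}$, with cofactors of $\FP_{\mo}$ equal to $2a_{\mo}\zzz^{\mo}$ on $\{\rrr=0\}$ and, when $c^{\ast}\ne 0$, $(\mo+1)b_{\mo}\zzz^{\mo}$ on $\{\rrr=c^{\ast}\zzz^{\mo+1}\}$ (equivalently, the conservative--dissipative decomposition of Proposition~\ref{con-dis} gives $\FP_{\mo}=\X_{\hd}+\mud\,(\zzz,(\mo+1)\rrr)^{T}$ with $\hd=-\tfrac12\rrr(\rrr-c^{\ast}\zzz^{\mo+1})$ and $\mud=\tfrac{2a_{\mo}+(\mo+1)b_{\mo}}{2(\mo+1)}\zzz^{\mo}$). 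If $c^{\ast}=0$, then $\IPI_{*}$ is a nonzero constant times $\rrr^{n_1}$, whence $\FP_{\mo}(\IPI_{*})=2n_1a_{\mo}\zzz^{\mo}\,\IPI_{*}\ne 0$, which is impossible; hence $c^{\ast}\ne 0$, and $\IPI_{*}$ is a nonzero constant times $\rrr^{\,n_1}(\rrr-c^{\ast}\zzz^{\mo+1})^{\,n_2}$ for some $n_1,n_2\in\Nzero$, not both zero. Then $\FP_{\mo}(\IPI_{*})=\bigl(2n_1a_{\mo}+(\mo+1)n_2b_{\mo}\bigr)\zzz^{\mo}\,\IPI_{*}=0$, and since $a_{\mo},b_{\mo}\ne 0$ this forces $n_1\ge 1$ and $n_2\ge 1$; dividing by $\gcd(n_1,n_2)$ yields coprime $n_1,n_2\in\N$ with $2n_1a_{\mo}+(\mo+1)n_2b_{\mo}=0$, which is item (b.3).

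\emph{Main difficulty.} Steps 1--2 and the final algebra of Step 3 are routine. The crux is the geometric content of Step 3: proving that $\{\rrr=0\}$ and $\{\rrr=c^{\ast}\zzz^{\mo+1}\}$ exhaust the invariant quasi-homogeneous curves of $\FP_{\mo}$, computing their cofactors, and using the factorization argument to conclude that any quasi-homogeneous first integral of $\FP_{\mo}$ must be a Darboux product of these two curves (so none exists when $c^{\ast}=0$). These are standard facts about planar quasi-homogeneous vector fields in the conservative--dissipative framework, but they carry the weight of the argument.
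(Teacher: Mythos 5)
Your proposal is correct and follows essentially the same route as the paper: reduction to the planar system via Propositions \ref{intanaformal} and \ref{proHZTB}, the case split on $\lo$ versus $\mo$, and, for $\lo=\mo$, the use of the invariant curves $\rrr=0$ and $\rrr=c^{\ast}\zzz^{\mo+1}$ with cofactors $2a_{\mo}\zzz^{\mo}$ and $(\mo+1)b_{\mo}\zzz^{\mo}$ to force $2n_1a_{\mo}+(\mo+1)n_2b_{\mo}=0$ with $n_1,n_2$ coprime. A minor remark: your treatment handles the Hamiltonian subcase $2a_{\mo}+(\mo+1)b_{\mo}=0$ and the coincidence $c^{\ast}=0$ uniformly (the paper splits these off via $\no$), and your coefficient $c^{\ast}=\fracp{2a_{\mo}-(\mo+1)b_{\mo}}{\mo+1}$ is the one consistent with the stated cofactors, whereas the paper's displayed $\CurvI_2$ carries a factor-$2$ slip in the denominator.
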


\begin{proof} 
From Proposition \ref{intanaformal}, we obtain that system (\ref{hopf-zero-system}) admits an analytic first integral if, and only if, its orbital normal form (\ref{FNHZEquiv}) has a formal first integral. 
From Proposition \ref{proHZTB}, this occurs if, and only if, the planar system (\ref{FNparecidoTakens}) is formally integrable. 
In this case, if we select an arbitrary type $\tpl\in\N^2$, the principal part of the vector field $\FP$ given in (\ref{Pe}) must be polynomially integrable.
Let us consider the following situations:
\begin{itemize}
\item 
Case $G_1(\zzz )\equiv G_2(\zzz )\equiv 0$. 
Let us take the type $\tpl=(1,1)$. 
Then, the principal part of $\FP$ is $\FP_0(\zzz ,\rrr )=(\rrr ,0)$ and $\IPIp=\rrr $ is an analytic first integral. 
This case is considered in item {\bf (a)} of the statement.

\item 
Case $G_1(\zzz )\not\equiv 0$ or $G_2(\zzz )\not \equiv 0$. 
Let us denote
\bea
\label{l0m0n0}
\no&:=&\min\llave{n\in\N: 2a_n+(n+1)b_n\neq 0}.
\eea
We notice that $\min\llave{\mo,\lo}\leq \no$ and it is possible that $\lo=+\infty$, $\mo=+\infty$ or $\no=+\infty$, but the situation $\lo=\mo=+\infty$ can not occur.
The following sub-cases can arise:
\begin{itemize}
\item 
If $\lo<\mo$, taking the type $\tpl=(1,\lo+1)$, the principal part of $\FP$ is 
$$ 
\FP_{\lo}(\zzz ,\rrr )=(\rrr ,2a_{\lo}\zzz ^{\lo}\rrr )^T \in \QH_{\lo}^{\tpl},
$$
and $\IPIp=\rrr -\fracp{2a_{\lo}}{\lo+1}\zzz ^{\lo+1}$ is an analytic first integral of $\FP_{\lo}$.
This case corresponds to item {\bf (b.1)}.
\item 
If $\mo<\lo$, taking the type $\tpl=(1,\mo+1)$, the principal part of $\FP$ is 
$$
\FP_{\mo}(\zzz ,\rrr )=(\rrr +b_{\mo}\zzz ^{\mo+1},0)^T\in\QH_{\mo}^{\tpl},
$$
and $\IPIp=\rrr $ is an analytic first integral of $\FP_{\mo}$.
This is the case {\bf (b.2)}.
\item 
If $\mo = \lo < \no$, then we have $2a_{\mo}+(\mo+1)b_{\mo}=0$. 
Taking the type $\tpl=(1,\mo+1)$, the principal part of $\FP$ is 
$$
\FP_{\mo}(\zzz ,\rrr )=(\rrr +b_{\mo}\zzz ^{\mo+1},-(\mo+1)b_{\mo} \zzz ^{\mo}\rrr )^T \in\QH_{\mo}^{\tpl},
$$
which is a Hamiltonian vector field, with Hamiltonian function 
${-b_{\mo}\zzz ^{\mo+1}\rrr -\fracp{1}{2}\rrr ^2}$. 
Therefore, $\FP_{\mo}$ is polynomially integrable.
This case is presented in item {\bf (b.3)} with $n_1=n_2=1$.
\item 
If $\mo = \lo = \no<+\infty$, then we have $2a_{\mo}+(\mo+1)b_{\mo}\neq 0$.
Taking the type $\tpl=(1,\mo+1)$, the principal part of $\FP$ is 
 $$
 \FP_{\mo}(\zzz ,\rrr )=(\rrr +b_{\mo}\zzz ^{\mo+1},2a_{\mo} \zzz ^{\mo}\rrr )^T\in\QH_{\mo}^{\tpl},
 $$
which is integrable because $\FP$ also is.

Let us consider the conservative-dissipative splitting (\ref{condis}) for $\FP_{\mo}$. 
The Hamiltonian function in the quoted splitting is
$$
{\hd}=-\fracp{1}{2}\rrr \parent{\rrr -\fracp{2a_{\mo}-(\mo+1)b_{\mo}}{2(\mo+1)}\zzz ^{\mo+1}}.
$$
Then, 
$$
\CurvI_1\equiv \rrr =0, \textrm{ and } \CurvI_2\equiv \rrr -\fracp{2a_{\mo}-(\mo+1)b_{\mo}}{2(\mo+1)}\zzz ^{\mo+1}=0,
$$
are invariant curves of $\FP_{\mo}$, with cofactors $\cofactor_1=2a_{\mo}\zzz ^{\mo}$ and $\cofactor_2=(\mo+1)b_{\mo}\zzz ^{\mo}$, respectively.
Since $\FP_{\mo}$ is polynomially integrable, there exist $n_1,n_2\in\N$ coprime such that $\CurvI_1^{n_1}\CurvI_2^{n_2}$ is a polynomial first integral of $\FP_{\mo}$, i.e.
$$ 
n_1 \cofactor_1 + n_2 \cofactor_2 = \parent{2 n_1 a_{\mo} + n_2(\mo+1) b_{\mo}}\zzz ^{\mo}=0.
$$
This case is presented also in item {\bf (b.3)}.
\end{itemize}
\end{itemize}
\end{proof}

Next, we present a necessary and sufficient condition for the existence of an analytic first integral of system (\ref{hopf-zero-system}).

\begin{theorem}\label{teHzinteg}
System (\ref{hopf-zero-system}) admits an analytical first integral if, and only if, its formal orbital normal form (\ref{FNHZEquiv}) is given in one of the following items:
\begin{description}
\item[(a)] 
$\dot{\x}=\F_0(\x)$. 
In this case, there exists a first integral of the form $\IPI=\hpp+\cdots$. 
Moreover, there is a curve of equilibria passing through the origin surrounded by an infinity of invariant cylinders.
\item[(b.1)]
$\dot{\x} = \F_0(\x) + \vedor{ G_1(z )\FZERO }{ 0 }$, with $G_1(z )=\sum_{k \geq 1} a_k z ^k$.
In this case, there exists a first integral of the form $\IPI=\hpp+\cdots$.
Moreover, there is a curve of equilibria passing through the origin.
\item[(b.2)]
$\dot{\x} = \F_0(\x) + \vedor{ \zero }{ G_2(z ) } $, with $G_2(z )=\sum_{k \geq 1} b_k z ^{k+1}$.
In this case, there exists a first integral of the form $\IPI=\hpp + \cdots$.
\item[(b.3)]
$\dot{\x} = \F_0(\x) + \F_{\mo}(\x)$, where $\F_{\mo}(\x) = \vedor{ a_{\mo} z ^{\mo} \FZERO }{ b_{\mo} z ^{\mo+1} } $, $\mo\in\N$, and $a_{\mo},b_{\mo}\in\R\setminus\llave{0}$ satisfy $2n_1a_{\mo}+(\mo+1)n_2b_{\mo}=0$ for some coprime natural numbers $n_1,n_2$. 
In this case, there exists a first integral of the form $\IPI=\hpp^{n_1+n_2}+\cdots$.
\end{description}
In the above expressions, the dots denote higher-order quasi-homogeneous terms.
\end{theorem}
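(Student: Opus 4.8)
The plan is to reduce everything, via the results already proved, to a case analysis of the planar field $\FP$ of (\ref{Pe}), exploiting that the line $v=0$ is invariant for $\FP$ and the conservative--dissipative decomposition of Proposition~\ref{con-dis}. By Propositions~\ref{intanaformal} and \ref{proHZTB}, system (\ref{hopf-zero-system}) admits an analytic first integral if and only if $\FP$ is formally integrable, and by Proposition~\ref{proCNintHZ} the leading quasi-homogeneous term of (\ref{FNHZEquiv}) is already one of the forms listed there. What remains is (i) to produce first integrals in the sufficiency direction, and (ii) to sharpen those leading-term conditions into $G_2\equiv0$ in (b.1), $G_1\equiv0$ in (b.2), and the resonance together with the vanishing of \emph{all} higher-order terms in (b.3).

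For sufficiency ($\Leftarrow$) I would write down a first integral of $\FP$ in each case and pull it back through $(u,v)=(z,\hpp)$ (using Proposition~\ref{intanaformal} to return from the normal form to (\ref{hopf-zero-system})). In (a) and (b.2) one has $G_1\equiv0$, so $\dot v=0$ and $\IPIp=v$ is a first integral; in (b.1) one has $G_2\equiv0$, so $\IPIp=v-2\widehat{G}_1(u)$ with $(\widehat{G}_1)'=G_1$ works; in (b.3), $\FP$ coincides with the quasi-homogeneous field $\FP_{\mo}$, which carries the invariant curves $\CurvI_1:v=0$ and $\CurvI_2:v-\alpha u^{\mo+1}=0$ (for a suitable $\alpha\neq0$ coming from the conservative--dissipative splitting) with cofactors $2a_{\mo}u^{\mo}$ and $(\mo+1)b_{\mo}u^{\mo}$, so the condition $2n_1a_{\mo}+(\mo+1)n_2b_{\mo}=0$ makes $\CurvI_1^{n_1}\CurvI_2^{n_2}$ a Darboux first integral. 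Undoing the change of variables these become a first integral $\hpp+\cdots$ (resp.\ $\hpp^{n_1+n_2}+\cdots$); the statements about the curve of equilibria on the $z$-axis (present precisely when $G_2\equiv0$) and the invariant cylinders $\hpp=\mathrm{const}$ in case (a) follow by inspecting (\ref{FNHZEquiv}) directly.

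For necessity ($\Rightarrow$), assume $\FP$ formally integrable. If $G_1\equiv0$ we are in (a) or (b.2), and if $G_1\not\equiv0$ but $G_2\equiv0$ we are in (b.1); so suppose $\lo,\mo$ are both finite. Then the flow of $\FP$ on $v=0$ is nontrivial, hence any formal first integral is constant there and therefore has the form $v^kJ$ with $k\geq1$ and $J(u,0)\not\equiv0$; from $\FP(v^kJ)=0$ one gets $\FP(J)=-2kG_1(u)J$, whose restriction to $v=0$ is $G_2(u)\,\partial_uJ(u,0)=-2kG_1(u)J(u,0)$. Comparing lowest orders in this scalar identity: if the leading monomial of $J(u,0)$ is $u^r$ with $r\geq1$ then $\lo=\mo$ and $2ka_{\mo}+rb_{\mo}=0$, while if $J(0,0)\neq0$ then necessarily $\lo>\mo$; in particular $\lo<\mo$ cannot occur. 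To exclude $\lo>\mo$, normalize $J(0,0)=1$, set $J=1+\widehat J$, and use $\FP(\widehat J)+2kG_1\widehat J=-2kG_1$: a degree count in type $(1,\mo+1)$ shows that at the order where $G_1$ first enters the equation reduces to $(v+b_{\mo}u^{\mo+1})\,\partial_u\widehat J_{\lo-\mo}=-2ka_{\lo}u^{\lo}$, whose right-hand side is not a multiple of $v+b_{\mo}u^{\mo+1}$ --- a contradiction. Hence $\lo=\mo$, and writing $(\mo+1)k/r$ in lowest terms as $n_1/n_2$ yields the resonance of (b.3). Finally one must show $\F=\F_0+\F_{\mo}$, i.e.\ that no higher-order coefficient survives: one continues the Darboux (Hamiltonian when $n_1=n_2=1$) first integral of $\FP_{\mo}$ order by order and checks that the obstruction at the first order beyond the principal part --- an element of the one-dimensional cokernel of the Lie derivative along $\FP_{\mo}$, made explicit through Proposition~\ref{con-dis} --- is a nonzero multiple of the first nonvanishing higher coefficient, which must therefore be zero. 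Assembling the cases gives exactly (a), (b.1), (b.2), (b.3).

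The main obstacle is this last point: verifying that the obstruction to extending the first integral past the principal part is genuinely nonzero whenever some higher-order normal-form coefficient is nonzero. This requires the precise structure of $\FP_{\mo}$ --- in particular the description of the cokernel of its Lie derivative via the conservative--dissipative splitting of Proposition~\ref{con-dis} and the integrability criterion for quasi-homogeneous planar vector fields used in \cite{AlgabaNonlinearity09} --- together with the routine but lengthy computation showing that the relevant cokernel component does not vanish.
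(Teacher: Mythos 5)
Your sufficiency argument and your exclusion of $\lo\neq\mo$ (both finite) are essentially sound, and they are organized somewhat differently from the paper: writing a formal first integral of $\FP$ as $v^{k}J$ with $k\geq 1$, $J(u,0)\not\equiv 0$, and restricting $\FP(J)=-2kG_1J$ to the invariant line $v=0$ replaces the paper's decomposition $\IPIp=\IPGp+\IPHp$ used in its cases \textbf{(b.1)} and \textbf{(b.2)}, and it also re-derives the resonance $2n_1a_{\mo}+(\mo+1)n_2b_{\mo}=0$ directly rather than quoting Proposition \ref{proCNintHZ}. Up to writing out the short induction showing that the components $\widehat J_j$ with $j<\lo-\mo$ are pure powers of $v$ (so that the degree-$\lo$ equation really collapses to $(v+b_{\mo}u^{\mo+1})\,\partial_u\widehat J_{\lo-\mo}=-2ka_{\lo}u^{\lo}$), this part of your plan works.

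The genuine gap is the core of the necessity in case \textbf{(b.3)}: proving that when $G_1\not\equiv 0$ and $G_2\not\equiv 0$, integrability forces the normal form to be \emph{exactly} $\F_0+\F_{\mo}$, with no higher-order terms. You reduce this to the claim that the obstruction at the first order beyond the principal part is a nonzero multiple of the first nonvanishing higher coefficient, and you yourself flag that you have not verified it; as stated the claim is neither proved nor obviously true, and it is not a routine cokernel computation. Non-integrability requires an obstruction at some order for \emph{every} candidate first integral, not the failure of one extension at the first order: the obstruction may vanish there and the relevant quantity is in general a polynomial expression in several normal-form coefficients, not a nonzero multiple of the first one. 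Moreover, before any order-by-order continuation can start you must know that the lowest-degree term of an arbitrary formal first integral of $\FP$ is a power of the Darboux integral $v^{n_1}\parent{v-\alpha u^{\mo+1}}^{n_2}$ of $\FP_{\mo}$, which you do not address. This is precisely where the paper does not compute but imports substantial results on nilpotent singularities: Corollary 4.23 of \cite{AlgabaNonlinearity09} in the Hamiltonian case $n_1=n_2=1$ and Theorem 1.2 of \cite{AGGintnilp16} in the dissipative case, which assert that formal integrability of such a perturbation of $\FP_{\mo}$ forces formal, respectively orbital, equivalence to $\FP_{\mo}$; only through these, combined with the normal form (\ref{FNHZEquiv}), do all higher coefficients disappear. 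Without a result of this strength, or a complete proof of your obstruction claim, case \textbf{(b.3)} of the theorem remains unproved.
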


\begin{proof} 
Firstly, we prove the sufficient condition. 
\begin{description}
\item[(a)] 
The normal form $\dot{\x} = \F_0(\x) $ has the first integral $\IPIt=\hpp$.
Undoing the normalizing transformations, we obtain that $\IPI=\hpp+\cdots$ is a first integral for system (\ref{hopf-zero-system}).
\item[(b.1)] 
The normal form $\dot{\x} = \F_0(\x) + \vedor{ G_1(z )\FZERO }{ 0 }$ admits the first integral $\IPIt=\hpp - 2 \int_0^{z } G_1(\xi)d\xi$.
Undoing the normalizing transformations, we obtain that $\IPI = \hpp + \cdots$ is a first integral for system (\ref{hopf-zero-system}).
\item[(b.2)] 
The normal form $\dot{\x} = \F_0(\x) + \vedor{ \zero }{ G_2(z ) }$ has the first integral $\IPIt=\hpp$.
Undoing the normalizing transformations, we obtain that $\IPI = \hpp + \cdots$ is a first integral for system (\ref{hopf-zero-system}).
\item[(b.3)] 
If $n_1=n_2=1$ (Hamiltonian case), then $\IPIt=\hpp^2+2b_{\mo} z ^{\mo+1} \hpp$ is a first integral for the normal form $\dot{\x} = \F_0(\x) + \F_{\mo}(\x)$. 
Undoing the normalizing transformations, we obtain that $\IPI=\hpp^2+\cdots$ is a first integral for system (\ref{hopf-zero-system}).

Otherwise (dissipative case), the normal form $\dot{\x} = \F_0(\x) + \F_{\mo}(\x)$ admits the first integral $\IPIt=\hpp^{n_1}\parent{\hpp-\fracp{2a_{\mo}-(\mo+1)b_{\mo}}{2(\mo+1)}z ^{\mo+1}}^{n_2}$. 
Undoing the normalizing transformations, we obtain that $\IPI=\hpp^{n_1+n_2}+\cdots$ is a first integral for system (\ref{hopf-zero-system}).
\end{description}
Next, we prove the necessary condition.
Let us assume that system (\ref{hopf-zero-system}) admits an analytical first integral and consider its orbital normal form (\ref{FNHZEquiv}).

From Proposition \ref{proCNintHZ}, the quoted formal orbital normal form is either $\dot{\x}=\F_0(\x)$ (that corresponds to the item {\bf (a)} of the statement) or $\dot{\x}=\F_0(\x)+\F_s(\x)+\cdots$, where $\F_s\in\QH_s^\ttr$ is given in one of the cases of item {\bf (b)} of Proposition \ref{proCNintHZ}.
We deal with each case separately.
\begin{description}
\item[(b.1)] 
Here, $\F_s(\x) = \vedor{ a_{\lo} z ^{\lo}\FZERO }{ 0 }\in\QH_s^\ttr$, where $s=2\lo$, and $a_{\lo}\in\R\setminus\llave{0}$. 
To complete the proof in this case, it is enough to show that $G_2(z )\equiv 0$ in the orbital normal form (\ref{FNHZEquiv}). 

We use \emph{reductio ad absurdum}: if $G_2(z )\not\equiv 0$ then $\mo<+\infty$.
Taking the type $\tpl=(1,\lo+1)$, the principal part of planar system (\ref{FNparecidoTakens}) is
$$ 
\FP_{\lo}(\zzz ,\rrr )=(\rrr ,2a_{\lo}\zzz ^{\lo}\rrr )^T \in \QH_{\lo}^{\tpl}.
$$
Let us denote the formal first integral of system (\ref{FNparecidoTakens}) by $\IPIp$.
We notice that $\IPGp(\zzz ,\rrr )=\rrr -2\int_0^{\zzz } G_1(\xi)d\xi$ is a first integral of the vector field $(\rrr ,2 G_1(\zzz )\rrr )^T$.
If we define $\IPHp=\IPIp-\IPGp$, we have:
$$
\nablatr \IPIp\cdot\FP=\nablatr (\IPGp+\IPHp)\cdot\FP=\nablatr \IPGp\cdot\FP+\nablatr \IPHp\cdot\FP
=\nablatr \IPGp\cdot (G_2(\zzz ),0)^T+\nablatr \IPHp\cdot\FP=0.
$$
In this equality, the quasi-homogeneous term of degree $\lo+\mo+1$ is given by
$$ 
-2a_{\lo}b_{\mo}\zzz ^{\lo+\mo+1}+\nablatr \IPHp_{\mo+1}\cdot\FP_{\lo}=-2a_{\lo}b_{\mo}\zzz ^{\lo+\mo+1}+\rrr \nablatr \IPHp_{\mo+1}\cdot(1,2a_{\lo} \zzz ^\lo)^T=0,
$$
where $\IPHp_{\mo+1}\in\qh_{\mo+1}^\tpl$ is the quasi-homogeneous term of degree $(\mo+1)$ of $\IPHp$.
Nevertheless, the above equation is incompatible. 
Hence, system (\ref{FNparecidoTakens}) can not admit any first integral and, applying Proposition \ref{proHZTB}, system (\ref{hopf-zero-system}) does not admit any formal first integral, which is contradictory.

\item[(b.2)] 
Now, $\F_s(\x) = \vedor{ \zero }{ b_{\mo} z ^{\mo+1} }\in\QH_s^\ttr$, where $s=2\mo$, 
and $b_{\mo}\in\R\setminus\llave{0}$. 
To complete the proof in this case, it is enough to show that $G_1(z )\equiv 0$ in the orbital normal form (\ref{FNHZEquiv}). 

Again, we use \emph{reductio ad absurdum}:  if $G_1(z )\not\equiv 0$ then $\lo <+\infty$.
Taking the type $\tpl=(1,\mo+1)$, the principal part of planar system (\ref{FNparecidoTakens}) is
 $$
 \FP_{\mo}(\zzz ,\rrr )=(\rrr +b_{\mo}\zzz ^{\mo+1},0)^T\in\QH_{\mo}^{\tpl}.
 $$
Let us denote the formal first integral of system (\ref{FNparecidoTakens}) by $\IPIp$.
We notice that $\IPGp(\zzz ,\rrr )=\rrr $ is a first integral of the vector field $(\rrr +G_2(\zzz ),0)^T$.
If we define $\IPHp=\IPIp-\IPGp$, then:
$$ 
\nablatr \IPIp\cdot\FP=\nablatr(\IPGp+\IPHp)\cdot\FP=\nablatr \IPGp\cdot\FP+\nablatr \IPHp\cdot\FP=\nablatr \IPGp\cdot (0,2 G_1(\zzz )\rrr )^T+\nablatr \IPHp\cdot\FP=0.
$$
In this equality, the quasi-homogeneous term of degree $\lo+\mo+1$ is given by
$$
2a_{\lo}\zzz ^{\lo}\rrr +\nablatr \IPHp_{\mo+1}\cdot\FP_{\mo}=2a_{\lo}\zzz ^{\lo}\rrr +\fracp{\partial \IPHp_{\mo+1}}{\partial \zzz }(\rrr +b_{\mo}\zzz ^{\mo+1})=0,
$$
where $\IPHp_{\mo+1}\in\qh_{\mo+1}^\tpl$ is the quasi-homogeneous term of degree $(\mo+1)$ of $\IPHp$.
As in the previous subcase, the above equation is incompatible, 
and system (\ref{FNparecidoTakens}) can not admit any first integral. 
Hence, by applying Proposition \ref{proHZTB}, we deduce that system (\ref{hopf-zero-system}) 
does not admit any formal first integral, which is contradictory.

\item[(b.3)] 
In this case, we have $\F_{\mo}(\x) = \vedor{ a_{\mo} z ^{\mo} \FZERO }{ b_{\mo} z ^{\mo+1} }\in\QH_s^\ttr $, where $\mo\in\N$, and $a_{\mo},b_{\mo}\in\R\setminus\llave{0}$ satisfy $2n_1a_{\mo}+(\mo+1)n_2b_{\mo}=0$ being $n_1,n_2\in\N$ coprimes.

There are two cases to be considered. 
The first one corresponds to the free-divergence case, that arises if $n_1=n_2=1$, and then $2a_{\mo}+(\mo+1)b_{\mo}=0$.
Taking the type $\tpl=(1,\mo+1)$, the principal part of planar system (\ref{FNparecidoTakens}) is
$$
\FP_{\mo}(\zzz ,\rrr )=(\rrr +b_{\mo}\zzz ^{\mo+1},-(\mo+1)b_{\mo} \zzz ^{\mo}\rrr )^T \in\QH_{\mo}^{\tpl}.
$$
This is a Hamiltonian vector field, with Hamiltonian function ${-b_{\mo}\zzz ^{\mo+1}\rrr -\fracp{1}{2}\rrr ^2}$.
Using Corollary 4.23 of \cite{AlgabaNonlinearity09}, we obtain that system (\ref{FNparecidoTakens}) is integrable if, and only if, it is formally equivalent to $(\dot \zzz ,\dot \rrr ) =\FP_{\mo}(\zzz ,\rrr )$.
Then, from Proposition \ref{proHZTB} we obtain that system (\ref{hopf-zero-system}) admits a first integral if, and only if, it is formally equivalent to $\dot{\x} = \F_0 (\x) +\F_{\mo}(\x).$
This falls into the item \textbf{(b.3)} of the statement.

In the second case (non-zero divergence), we have $2n_1a_{\mo}+(\mo+1)n_2b_{\mo}=0$ for some $n_1,n_2\in\N$ coprimes with $n_1\neq 1$ or $n_2\neq 1$.
Taking the type $\tpl=(1,\mo+1)$, the principal part of planar system (\ref{FNparecidoTakens}) is
$$
\FP_{\mo}(\zzz ,\rrr )=(\rrr +b_{\mo}\zzz ^{\mo+1},2a_{\mo} \zzz ^{\mo}\rrr )^T\in\QH_{\mo}^{\tpl},
$$
which is integrable because $\rrr ^{n_1}\parent{\rrr -\fracp{2a_{\mo}-(\mo+1)b_{\mo}}{2(\mo+1)}\zzz ^{\mo+1}}^{n_2}$ is a first integral and besides $\diverg\parent{\F_{\mo}}=2a_{\mo}+(\mo+1)b_{\mo}\neq 0$.

Using Theorem 1.2 of \cite{AGGintnilp16}, we obtain that $\FP$ is orbitally equivalent to $\FP_{\mo}$.
This case corresponds to the item \textbf{(b.3)} of the statement.
\end{description}
\end{proof}

Using the above theorem, an algorithm providing necessary conditions for the integrability of a polynomial vector field can be derived in cases \textbf{(a)}, \textbf{(b.1)} and \textbf{(b.2)} of Theorem \ref{teHzinteg}. 
First, we present a technical lemma.

\begin{lemma}\label{algoritmoIPI} 
Let us consider the vector field $\F$ associated with system (\ref{hopf-zero-system}). 
Then, there exists a unique scalar function $\IPI=\hpp+\sum_{k\geq 3}\IPI_k$, where $\IPI_k\in\qh_k^\t$, such that the term $\hpp^k$ is missing in $\IPI_{2k}$, for all $k$, that verifies
\beq\label{ecualgI}
\nablatr \IPI\,\cdot\F =\sum_{k\geq 2} \alpha_k z^k,
\eeq
where $\alpha_k\in\R$. 
\end{lemma}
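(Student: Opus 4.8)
The plan is to construct $\IPI$ degree by degree, solving a homological equation at each step. Writing $\F = \F_0 + \F_1 + \cdots$ with $\F_j \in \QH_j^\ttr$, and $\IPI = \hpp + \IPI_3 + \IPI_4 + \cdots$ with $\IPI_k \in \qh_k^\ttr$, we expand $\nablatr\IPI\cdot\F$ into quasi-homogeneous components. The degree-$k$ component of $\nablatr\IPI\cdot\F$ has the form $\nablatr\IPI_{k+2}\cdot\F_0 + (\text{terms depending only on } \IPI_3,\dots,\IPI_{k+1})$. Recalling that $\F_0 = (\X_\hpp, \hpp)^T$ with $\X_\hpp$ the rotation field and $\hpp = x^2+y^2$, the operator $\IPI_{k+2}\mapsto \nablatr\IPI_{k+2}\cdot\F_0$ acts on the finite-dimensional space $\qh_{k+2}^\ttr$. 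So at each degree $k\ge 2$ we must solve $\nablatr\IPI_{k+2}\cdot\F_0 = -(\text{known term}) + (\text{multiple of }z^{k})$, where the multiple $\alpha_k z^k$ of the monomial $z^k$ is a free parameter we are allowed to introduce (this is the right-hand side of (\ref{ecualgI})).

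The key algebraic fact to establish is a decomposition of $\qh_{k+2}^\ttr$ adapted to the operator $\Lequiv_0(\cdot):=\nablatr(\cdot)\cdot\F_0$. Using that $\F_0$ is rotation-invariant, it is convenient to pass to coordinates $(\hpp, z)$ together with the angular variable; a rotation-invariant polynomial in $\qh_{k+2}^\ttr$ is a polynomial in $\hpp$ and $z$ of the appropriate weighted degree, i.e. it lies in $\spande\{\hpp^i z^j : 2i + 2j = k+2\}$, while the non-rotation-invariant part is annihilated modulo lower structure. One checks that on the rotation-invariant part $\Lequiv_0$ reduces to the operator coming from $\dot z = \hpp$ (the third component of $\F_0$), namely $\hpp\,\partial_z$ acting on $\R[\hpp,z]$. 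The image of $\hpp\,\partial_z$ on the weighted-degree-$(k+2)$ piece is exactly the span of $\{\hpp^{i}z^{j}: i\ge 1\}$ of degree $k+2$; the cokernel is one-dimensional when $k$ is even, spanned by $z^{k/2+1}$... — here one must be careful with the weighting: $z^k$ has type-degree $2k$, so the obstruction monomial appearing at type-degree $k$ is $z^{k/2}$ when $k$ is even. This is precisely why the right-hand side of (\ref{ecualgI}) is a series in $z$ and why the normalization "the term $\hpp^k$ is missing in $\IPI_{2k}$" is available: $\hpp^k$ is the element of $\qh_{2k}^\ttr$ in the kernel of $\Lequiv_0$, so $\IPI_{2k}$ is determined only up to a multiple of $\hpp^k$, and fixing that multiple to be zero pins down $\IPI$ uniquely.

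So the steps, in order, are: (i) set up the grading and write the degree-$k$ piece of $\nablatr\IPI\cdot\F$ as $\Lequiv_0\IPI_{k+2} + (\text{lower-order known data})$; (ii) prove the structural lemma that $\Ker\Lequiv_0$ restricted to $\qh_{2k}^\ttr$ is $\R\hpp^k$ (and trivial in odd degrees), and that $\Range\Lequiv_0$ on $\qh_{k}^\ttr$ has codimension one in even degree, complemented by $\R z^{k/2}$, and is everything in odd degree; (iii) solve inductively: at each degree, choose $\alpha_k$ (zero in odd degree) to land the right-hand side in $\Range\Lequiv_0$, solve for $\IPI_{k+2}$ modulo $\Ker\Lequiv_0$, and use the normalization to fix the kernel ambiguity; (iv) conclude existence and uniqueness of the formal series $\IPI$. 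The main obstacle is step (ii): one needs a clean proof that the rotation-invariant reduction really does capture $\Ker$ and $\Range$ of $\Lequiv_0$ correctly — i.e. that the non-invariant monomials contribute nothing new to kernel or cokernel. This follows because $\X_\hpp$ acts on the non-invariant part as an invertible skew operator (rotation with no zero Fourier mode), so $\Lequiv_0$ is surjective there; but making this precise, with the $z$-dependence mixing Fourier modes, is the delicate bookkeeping. Everything else is a routine induction once the homological structure is in hand.
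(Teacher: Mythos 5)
Your proposal follows essentially the same route as the paper: a degree-by-degree homological argument with the operator $\mu\mapsto\nablatr\mu\cdot\F_0$ on $\qh_k^\ttr$, whose kernel in even degree $2k$ is spanned by $\hpp^{k}$ and whose cokernel by $z^{k}$ (exactly the facts the paper quotes from \cite{AlgHZNF}), with the normalization that $\hpp^k$ is absent from $\IPI_{2k}$ removing the kernel ambiguity and giving uniqueness. The only differences are cosmetic: you sketch the kernel/cokernel computation via the rotation-invariant reduction instead of citing it (and the feared ``mixing of Fourier modes'' does not occur, since $\hpp\,\partial_z$ preserves angular modes, so your argument closes), together with a harmless indexing slip ($\IPI_{k+2}$ versus $\IPI_k$ at degree $k$) that you effectively self-correct when you identify the obstruction monomial at degree $k$ as $z^{k/2}$.
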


\begin{proof}
The $k$-degree quasi-homogeneous term of the left-hand side of equality (\ref{ecualgI}) is 
$$
 \parent{\nablatr \IPI\,\cdot\F }_k=\nablatr \IPI_k\cdot\F_0 +   \Rr_k=\Oplt_{k}\parent{\IPI_k} +   \Rr_k,
 $$
where
$$ 
  \Rr_k=\sum_{j=2}^{k-1}\parent{\nablatr \IPI_j\,\cdot\F_{k-j} }\in\qh_k^\t,
$$
and we have introduced the Lie derivative operator associated with the principal part of the 
vector field (\ref{hopf-zero-system}), which is defined by:
\bea
\Opl_{k} &:& \qh_{k-r}^\t \longrightarrow \qh_{k}^\t \label{Opele} \\
&& \, \muu_{k-r}\longrightarrow\Opl_{k}\parent{ \muu_{k-r}}=\nabla \muu_{k-r}\cdot\F_{0}.
\nonumber
\eea

Reasoning as in the classical Normal Form Theory, it is possible to choose $\IPI_k$ in order to annihilate the part of $\IPI_k$ 
belonging to the range of the operator $\Oplt_{k}$. 
In such a way, we can achieve 
$$
\parent{\nablatr \IPI\,\cdot\F }_k =   \Rr_k^c\in\Cor\parent{\Oplt_{k}},
$$
a complementary subspace to $\Range(\Oplt_{k})$.
In \cite{AlgHZNF}, it is obtained that, if $k$ is even, then $\Cor(\Oplt_{k}) =\spande\llave{z^{k/2}}$
and $\Ker\parent{\Oplt_{k}}=\spande\llave{ \hpp^{k/2} }$, whereas if $k$ is odd, then $\Cor(\Oplt_{2k_1+1})=\Ker\parent{\Oplt_{k}}=\llave{0}$.

To complete the proof, it is enough to use that, if $k$ is odd, then $  \Rr_k^c=0$, whereas if $k$ is even, we get $  \Rr_k^c=\alpha_i z^i$ where $i= k/2$.
Moreover, in this last case the term $(x^2+y^2)^{k/2}$ can be dropped from the expression of $\IPI_k$ because 
$\Ker\parent{\Oplt_{k}}=\spande\llave{ \hpp^{k/2} }$.
\end{proof}

\begin{theorem}\label{ggg} 
Let us consider system (\ref{hopf-zero-system}) and assume that its formal orbital normal form (\ref{FNHZEquiv}) falls into the cases \textbf{(a)}, \textbf{(b.1)} or \textbf{(b.2)} of Proposition \ref{proCNintHZ}.
Let also consider the unique scalar function $\IPI$ introduced in Lemma \ref{algoritmoIPI} satisfying (\ref{ecualgI}).
Then, system (\ref{hopf-zero-system}) admits an analytical first integral if, and only if, $\alpha_k=0$ for all $k$.
\end{theorem}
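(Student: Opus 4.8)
The plan is to exploit the dichotomy already established in the preceding results: by Proposition \ref{intanaformal}, system (\ref{hopf-zero-system}) has an analytic first integral iff its orbital normal form (\ref{FNHZEquiv}) has a formal one, and under the standing hypothesis (cases \textbf{(a)}, \textbf{(b.1)}, \textbf{(b.2)} of Proposition \ref{proCNintHZ}), Theorem \ref{teHzinteg} tells us that such a first integral, when it exists, has the form $\IPI=\hpp+\cdots$. So the natural strategy is to compare the \emph{canonical} scalar function $\IPI$ produced by Lemma \ref{algoritmoIPI} with a hypothetical genuine first integral, and show that the obstruction coefficients $\alpha_k$ in (\ref{ecualgI}) vanish exactly when the genuine first integral exists.

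For the sufficient direction, assume $\alpha_k=0$ for all $k$. Then (\ref{ecualgI}) becomes $\nablatr\IPI\cdot\F=0$, so $\IPI=\hpp+\sum_{k\geq 3}\IPI_k$ is itself a formal first integral of (\ref{hopf-zero-system}); since it is non-constant (its leading term is $\hpp$), Proposition \ref{intanaformal} upgrades it to an analytic first integral. This direction is immediate.

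For the necessary direction, suppose (\ref{hopf-zero-system}) admits an analytic first integral. By Proposition \ref{intanaformal} and the structure results of Theorem \ref{teHzinteg} (items \textbf{(a)}, \textbf{(b.1)}, \textbf{(b.2)}), there is a formal first integral $\IPIt=\hpp+\cdots$ of the orbital normal form; pulling back through the normalizing transformations yields a formal first integral $\IPIp=\hpp+\sum_{k\geq3}\IPIp_k$ of (\ref{hopf-zero-system}) itself with leading term $\hpp$. The key step is a \emph{uniqueness} argument: I would show that the canonical function $\IPI$ of Lemma \ref{algoritmoIPI} is forced to coincide with (the appropriately normalized version of) any formal first integral whose leading term is $\hpp$. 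Concretely, consider $\IPH:=\IPIp-\IPI$, which satisfies $\IPH=\sum_{k\geq3}\IPH_k$ with no $\hpp$-term in $\IPH_{2j}$ by construction, and $\nablatr\IPH\cdot\F=-\sum_{k}\alpha_kz^k$. Comparing quasi-homogeneous degree-$k$ terms gives $\Oplt_k(\IPH_k)=-\alpha_{k/2}z^{k/2}-\Rr_k$ where $\Rr_k$ depends only on $\IPH_j$ for $j<k$; but $z^{k/2}\in\Cor(\Oplt_k)$, so the solvability of this equation for $\IPH_k\in\qh_k^\t$ \emph{modulo} the kernel $\spande\{\hpp^{k/2}\}$ forces $\alpha_{k/2}=0$ at each even $k$ — because the left side lies in $\Range(\Oplt_k)$ while $z^{k/2}$ spans a complement, so $\alpha_{k/2}=0$ is the compatibility condition, and then $\IPH_k$ is determined up to its $\hpp^{k/2}$-component, which is fixed to be zero by the normalization. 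Inducting on $k$, all $\alpha_k$ vanish.

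The main obstacle is making the induction in the necessary direction fully rigorous: one must verify that $\Rr_k$ genuinely involves only lower-degree terms $\IPH_j$ (so the recursion is well-founded), that $\IPH$ inherits the normalization ``no $\hpp^{j}$ in degree $2j$'' from $\IPI$ and $\IPIp$ (the latter may require first re-normalizing $\IPIp$ by subtracting a suitable formal series in $\hpp$, using that $\hpp$ is itself a first integral of $\F_0$ but not of $\F$ — so care is needed, and one should instead argue that $\nablatr\hpp^{j}\cdot\F$ contributes only to higher degrees controllably), and that the decomposition $\qh_k^\t=\Range(\Oplt_k)\oplus\Cor(\Oplt_k)$ with $\Cor(\Oplt_{2j})=\spande\{z^{j}\}$, $\Ker(\Oplt_{2j})=\spande\{\hpp^{j}\}$ from \cite{AlgHZNF} is applied correctly degree by degree. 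Once the bookkeeping is set up, the argument is a clean ``the canonical obstruction series vanishes iff a first integral exists'' statement, entirely parallel to the classical normal-form obstruction-to-integrability computations.
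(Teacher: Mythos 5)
Your easy direction ($\alpha_k=0$ for all $k$ implies integrability, via Proposition \ref{intanaformal}) is exactly the paper's. In the other direction your skeleton (compare a genuine first integral with leading term $\hpp$ against the canonical function of Lemma \ref{algoritmoIPI}) is also the right one, but there is a genuine gap at precisely the point you flag: the normalization of the genuine first integral. Your claim that $\IPH=\IPIp-\IPI$ has no $\hpp^{j}$-component in degree $2j$ ``by construction'' is false, since Theorem \ref{teHzinteg} only provides $\IPIp=\hpp+\cdots$ with no control on the $\hpp^{j}$-terms of its higher quasi-homogeneous parts; and without that normalization your induction collapses, because once some $\IPH_{2j}$ is a nonzero multiple of $\hpp^{j}$, the remainder terms $\Rr_k$ at later degrees need not lie in $\Range(\Oplt_{k})$, so the compatibility argument no longer yields $\alpha_{k/2}=0$ (it only yields $\alpha_{k/2}z^{k/2}$ plus the complement-component of $\Rr_k$ equal to zero). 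The repair you propose --- subtracting from $\IPIp$ a formal series in $\hpp$ --- does not work: $\hpp$ is not a first integral of $\F$, so the corrected function would no longer satisfy $\nablatr(\cdot)\cdot\F=0$, which is exactly the property the argument needs; your fallback remark about $\nablatr\hpp^{j}\cdot\F$ contributing ``controllably'' to higher degrees is not a proof.

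The missing idea, which is the paper's proof, is to renormalize with powers of the first integral itself: set $\IPIt=\IPIp-\sum_{k\geq 2}\beta_k\,\IPIp^{\,k}$. This is a formal first integral for every choice of the $\beta_k$, and since the quasi-homogeneous leading term of $\IPIp^{\,k}$ is $\hpp^{k}$ in degree $2k$, the $\beta_k$ can be chosen recursively so that $\IPIt$ has no $\hpp^{k}$-term in its degree-$2k$ part. Then $\IPIt$ satisfies the normalization of Lemma \ref{algoritmoIPI} and verifies (\ref{ecualgI}) with all right-hand coefficients equal to zero, so the uniqueness statement of that lemma forces $\IPI=\IPIt$ and hence $\alpha_k=0$ for all $k$; your degree-by-degree induction is then unnecessary. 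With this single substitution your argument closes and essentially coincides with the paper's.
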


\begin{proof}
The necessary condition is trivial:
If $\alpha_k=0$ for all $k$, then from (\ref{ecualgI}) we find that $\nablatr \IPI\,\cdot\F =0$, i.e., 
$\IPI$ is a formal first integral.
From Proposition \ref{intanaformal}, we deduce that system (\ref{hopf-zero-system}) admits an analytical first integral.

Let us prove the sufficient condition.
If system (\ref{hopf-zero-system}) admits an analytical first integral, from Theorem \ref{teHzinteg} we obtain that it also admits a first integral of the form $\IPI=\hpp + \cdots$.
Then, $\IPIt=\IPI -\sum_{k\geq 2} \beta_k \IPI^k$ is a formal first integral for all $\beta_k$, i.e., $\nablatr \IPIt\,\cdot\F =0$.
It is enough to select $\beta_k$ such that the term $\hpp^k$ is missing in the quasi-homogeneous terms of degree $2k$ in $\IPIt$ to obtain the result.
\end{proof}

This theorem allows to define an algorithm for obtaining necessary conditions for the integrability of a polynomial vector field.
Namely, it is enough to look for the unique function of the form $\IPI=\hpp + \cdots$ (specified in Lemma \ref{algoritmoIPI}) and then discard cases of non-integrability from the conditions $\alpha_2\neq 0$, ...

Nevertheless, this algorithm is not applicable for vector fields whose orbital normal form (\ref{FNHZEquiv}) falls into the case \textbf{(b.3)} because in this case the first integral $\IPI=\hpp^{n_1+n_2}+\cdots$ is not determined since the values $n_1,n_2$ are unknown.
 
In the next subsection, we present a different approach, based on the existence of an inverse Jacobi multiplier, 
that overcomes this difficulty.

\section{Relation between the integrability and the existence of an inverse Jacobi multiplier}
\label{ExistenceJacobi}

In this subsection, we study the relation between the integrability of the nondegenerate Hopf-zero singularity and the existence of an inverse Jacobi multiplier for such a singularity (see \cite{BG03,BGM1,BGM2}).

In particular, an algorithm to determine three-dimensional integrable vector fields, based on the use of scalar functions, can be derived.
We recall that an inverse Jacobi multiplier for system (\ref{hopf-zero-system}) is a smooth function $\M$ which satisfies $\nablatr \M \cdot \F = \diverg(\F)\, \M$ in some neighborhood of the equilibrium at the origin.
We observe that, if $\M$ does not vanish in a open set, then the above equality is equivalent to $ \diverg\parent{\frac 1 \M \F}=0$.
For planar systems, the inverse Jacobi multipliers are usually referred as inverse integrating factors.

\begin{lemma}\label{leNofiir} 
System (\ref{FNparecidoTakens}), with $G_1(\zzz )G_2(\zzz )\not\equiv 0$, does not admit any inverse integrating factor of the form $\facV=\rrr +\cdots$.
\end{lemma}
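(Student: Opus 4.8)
The plan is to argue by contradiction, assuming that $\facV = \rrr + \cdots$ is an inverse integrating factor of system (\ref{FNparecidoTakens}), and extract a contradiction from the lowest-order quasi-homogeneous obstruction, in the same style as the \emph{reductio ad absurdum} arguments used in the proof of Theorem \ref{teHzinteg}. Recall that $\facV$ being an inverse integrating factor for the planar field $\FP$ of (\ref{Pe}) means $\nablatr \facV \cdot \FP = \diverg(\FP)\,\facV$. First I would write everything in quasi-homogeneous components: since $G_1 G_2 \not\equiv 0$, both $\lo<+\infty$ and $\mo<+\infty$ are finite, and $\diverg(\FP) = G_2'(\zzz) + 2 G_1(\zzz) = (\mo+1)b_{\mo}\zzz^{\mo} + 2 a_{\lo}\zzz^{\lo} + \cdots$; in particular $\diverg(\FP)$ has a nonzero lowest-degree term of degree $n:=\min\{\lo,\mo\}$ in $\zzz$. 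Choosing the type $\tpl=(1,n+1)$, the principal part of $\FP$ is one of the three polynomially integrable planar fields $\FP_{n}$ appearing in the proof of Proposition \ref{proCNintHZ}, and $\facV = \rrr + \cdots$ has $\rrr \in \qh_{n}^{\tpl}$ as its principal part.

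The key computation is then to compare quasi-homogeneous terms in the identity $\nablatr \facV \cdot \FP = \diverg(\FP)\,\facV$. The degree-$n$ term on the left is $\nablatr(\rrr)\cdot\FP_{n}$, which equals $\diverg(\FP_{n})\,\rrr$ precisely because $\rrr$ is (up to scalar) an inverse integrating factor of the principal part $\FP_{n}$ in every case of Proposition \ref{proCNintHZ} — indeed in cases (b.1)/(b.2) the first integral is $\IPIp = \rrr$ or $\rrr - \frac{2a_{\lo}}{\lo+1}\zzz^{\lo+1}$ and one checks directly, and in the Hamiltonian case $\lo=\mo$ with $2a_{\mo}+(\mo+1)b_{\mo}=0$ the divergence of $\FP_{n}$ vanishes and $\rrr$ divides the Hamiltonian appropriately. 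So the degree-$n$ equation is automatically satisfied, and the first genuine constraint appears at a higher degree. I would then locate the first degree $m$ at which the correction term of $\facV$ (call it $\facV_{m}$, with $\facV = \rrr + \sum_{j>n}\facV_{j}$) must absorb a term of the form $c\,\zzz^{a}\rrr^{b}$ with $c\neq 0$ that is \emph{not} in the range of the relevant cohomological operator $\nablatr(\cdot)\cdot\FP_{n} - \diverg(\FP_{n})\,(\cdot)$ acting on $\qh_{m}^{\tpl}$ — exactly the mechanism (an incompatible linear equation for a quasi-homogeneous polynomial in $\zzz$ alone, after dividing by $\rrr$) that produced the contradictions in cases (b.1) and (b.2) of Theorem \ref{teHzinteg}.

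The structure of the argument therefore splits into the three sub-cases $\lo<\mo$, $\mo<\lo$, and $\lo=\mo$ (with the latter further splitting according to whether the divergence of the leading homogeneous part vanishes). In each sub-case I would: (i) fix the type $\tpl=(1,n+1)$; (ii) write the invariant-curve data of $\FP_{n}$ — its invariant curves $\CurvI_i$ with cofactors $\cofactor_i$ — so that any inverse integrating factor of $\FP_{n}$ is a product of powers of the $\CurvI_i$; (iii) observe that an inverse integrating factor $\facV=\rrr+\cdots$ of the \emph{full} field $\FP$ forces $\rrr$ to extend to an inverse integrating factor of $\FP$ itself, and then — using that the full $\FP$ has $G_1 G_2 \not\equiv 0$, so the factor $\CurvI_1 \equiv \rrr$ carries a nonzero cofactor while no genuine invariant curve of the form $\zzz = \text{const}$ exists — derive that the required power structure is inconsistent, i.e. track the first nonvanishing obstruction coefficient $\alpha\zzz^{\ell}$ and show it cannot be cancelled. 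The main obstacle I anticipate is the bookkeeping in the mixed case $\lo=\mo$ with nonzero divergence: there $\FP_{n}$ has two distinct invariant lines $\CurvI_1\equiv\rrr=0$ and $\CurvI_2\equiv\rrr-\fracp{2a_{\mo}-(\mo+1)b_{\mo}}{2(\mo+1)}\zzz^{\mo+1}=0$, and one must rule out that $\rrr$ could complete to $\CurvI_1^{p}\CurvI_2^{q}\cdot(\text{unit})$-type inverse integrating factor of the full system by carefully using that $G_2(\zzz)$ (equivalently the higher-order deformation) destroys $\CurvI_2$ as an invariant curve of $\FP$; this is where the quasi-homogeneous expansion must be pushed one degree past the principal part and the incompatibility of a scalar ODE in $\zzz$ exhibited, exactly as in the displayed degree-$(\lo+\mo+1)$ computations of Theorem \ref{teHzinteg}.
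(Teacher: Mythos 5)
Your plan defers exactly the step that constitutes the lemma. Saying you ``would locate the first degree $m$ at which a term not in the range of the cohomological operator must be absorbed'' is not a proof: for the multiplier equation $\nablatr\facV\cdot\FP=\diverg(\FP)\,\facV$ the term $\diverg(\FP)\,\facV$ feeds all the unknown higher-order coefficients of $\facV$ into every degree, so it is not clear that a ``first obstruction'' exists independently of the choices made at lower degrees. In the first-integral cases \textbf{(b.1)}, \textbf{(b.2)} of Theorem \ref{teHzinteg} the paper could exhibit a one-shot incompatibility only because of the auxiliary splitting $\IPIp=\IPGp+\IPHp$ with $\IPGp$ an explicitly known first integral of a partial field; you give no analogue of that device here. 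Moreover, the case you yourself flag as the obstacle, $\lo=\mo$, is precisely where your leading-order claim fails: $\rrr$ is \emph{not} an inverse integrating factor of $\FP_{\mo}=(\rrr+b_{\mo}\zzz^{\mo+1},2a_{\mo}\zzz^{\mo}\rrr)^T$, since $\nablatr\rrr\cdot\FP_{\mo}=2a_{\mo}\zzz^{\mo}\rrr$ while $\diverg(\FP_{\mo})\,\rrr=(2a_{\mo}+(\mo+1)b_{\mo})\zzz^{\mo}\rrr$ and $b_{\mo}\neq0$ (in the Hamiltonian sub-case $\rrr$ is likewise not a first integral of $\FP_{\mo}$). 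Finally, with respect to the type $\tpl=(1,n+1)$ the principal part of $\facV=\rrr+\cdots$ need not be $\rrr$: monomials $\zzz^i$ with $3\le i\le n+1$ hidden in the dots can have quasi-degree $\le n+1$, so even identifying the lowest-degree equation requires an argument you do not supply.

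The paper's proof runs on a structural input absent from your plan: by the results of \cite{AGGintnilp16,AGGintnilpfii17,Algaba18}, system (\ref{FNparecidoTakens}) (or its transform under $\RR=\rrr+G_2(\zzz)$ when $\mo<\lo$) has exactly two irreducible invariant curves, $\CurvI_1\equiv\rrr=0$ and $\CurvI_2=0$. Since the zero set of $\facV$ is invariant, this forces $\facV=\CurvI_1^{n_1}\CurvI_2^{n_2}\serU$ with $\serU$ a unit series; the form $\rrr+\cdots$ gives $n_1+n_2=1$, and comparison of the principal part with the known quasi-homogeneous inverse integrating factors of the principal part forces $\facV=\rrr\,\serU$. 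Substituting this closed form into the multiplier equation then produces, in each of the three orderings of $\lo$ and $\mo$, an identity whose lowest-degree quasi-homogeneous term (for instance $(\mo+1)b_{\mo}\zzz^{\mo}$, independent of $\rrr$) cannot be cancelled — an all-orders contradiction obtained without any degree-by-degree induction. To repair your proposal you would either have to invoke this classification of invariant curves (or an equivalent factorization of $\facV$), or genuinely carry out a complete obstruction argument valid at every order and uniform in the unknown tail of $\facV$; as written there is a real gap.
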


\begin{proof} 
We use \emph{reductio ad absurdum}.
Let us suppose on the contrary that there exist an inverse integrating factor of system (\ref{FNparecidoTakens}) of the form $\facV=\rrr +\cdots$.
Then, $\facV=0$ is an invariant curve of the quoted system.

Let us consider $\lo, \mo, \no$ defined in (\ref{l0m0}), (\ref{l0m0n0}).
We recall that $\min\llave{\mo,\lo}\leq \no$.
We consider the following sub-cases:

\begin{itemize} 
\item $\lo<\mo$. 
Taking the type $\tpl=(1,\lo+1)$ the principal part of planar system (\ref{FNparecidoTakens}) is 
$$
\FP_{\lo}(\zzz ,\rrr )=(\rrr ,2a_{\lo}\zzz ^{\lo}\rrr )^T \in \QH_{\lo}^{\tpl}.
$$

Let us denote $\CurvI_1=\rrr$. 
It is trivial to show that $\CurvI_1=0$ is an invariant curve of system (\ref{FNparecidoTakens}).
Moreover, in \cite{AGGintnilp16,AGGintnilpfii17,Algaba18} it is shown that system (\ref{FNparecidoTakens}) has an invariant curve of the form $\CurvI_2=0$, where  $\CurvI_2=\rrr -\fracp{2a_{\lo}}{\lo+1}\zzz ^{\lo+1}+\cdots$, and $\CurvI_1=0$, $\CurvI_2=0$ are the unique irreducible invariant curves of system (\ref{FNparecidoTakens}).
Hence, any invariant curve of system (\ref{FNparecidoTakens}) can be written as $\CurvI_1^{n_1}\CurvI_2^{n_2}\serU=0$, for some unity formal series $\serU$, and $n_1,n_2\in\N\cup\llave{0}$. 
 
Consequently, the inverse integrating factor can be written as $\facV=\CurvI_1^{n_1}\CurvI_2^{n_2}\serU$, since $\facV=0$ is an invariant curve. 
On the other hand, as $\facV=\rrr +\cdots$, we obtain $n_1+n_2=1$.

As the principal part of $\facV$ is an inverse integrating factor of $\FP_{\lo}$, and those are of the form $\rrr (\rrr -\fracp{2a_{\lo}}{\lo+1}\zzz ^{\lo+1})^{n_2}$ (see also \cite{AGGintnilpfii17,Algaba18}), we deduce $n_2=0$ and then $\facV=\rrr \serU$. 
The condition of inverse integrating factor on $\facV$ leads to:
\bean 
0&=&\nablatr \facV\cdot\FP-\facV \, \diverg\parent{\FP}=\rrr \nablatr \serU\cdot\FP+\serU\nablatr \rrr \cdot\FP-\rrr \serU (G_2'(\zzz ) + 2 G_1(\zzz ))\\
&=&\rrr \fracp{\partial\serU}{\partial \zzz } (\rrr + G_2(\zzz )) + 2 \rrr ^2 \fracp{\partial\serU}{\partial \rrr } G_1(\zzz )+2\serU G_1(\zzz )\rrr -\rrr \serU(G_2'(\zzz )+2 G_1(\zzz ))\\
&=&\rrr \parent{\fracp{\partial\serU}{\partial \zzz } (\rrr + G_2(\zzz ))+2\rrr \fracp{\partial\serU}{\partial \rrr } G_1(\zzz )-\serU G_2'(\zzz )}\\
&=&\rrr\serU^2 \parent{\fracp{2\rrr G_1(\zzz )}{\serU}-\fracp{\partial}{\partial \zzz }\parent{\fracp{\rrr +G_2(\zzz )}{\serU}}}.
\eean
Nevertheless, this equation is incompatible, because the lowest degree quasi-homogeneous term of $\fracp{\partial}{\partial \zzz }\parent{\fracp{\rrr + G_2(\zzz )}{\serU}}$ is $(\mo+1)b_{\mo}\zzz ^{\mo}$, that does not depend on $\rrr $ which is contradictory.

\item $\mo<\lo$. 
The transformation $ \ZZ=\zzz$, $\RR=\rrr +G_2(\zzz )$ leads system (\ref{FNparecidoTakens}) into 
\bea
{\dot{\ZZ }} & = & \RR,\label{systemPtilde}\\
{\dot{\RR }} & = & {(2 G_1(\ZZ ) + G_2'(\ZZ ))\RR- 2 G_1(\ZZ ) G_2(\ZZ )}.\nonumber
\eea
Let us denote the vector field of this system by $\FPt$. 
If $\facV=\rrr +\cdots$ is an inverse integrating factor of system (\ref{FNparecidoTakens}), then system (\ref{systemPtilde}) admits an inverse integrating factor of the form $\facVt=\RR+\cdots$. 
 
The rest of the proof is similar to the previous item.
Taking the type $\tpl=(1,\mo+1)$, the principal part of $\FPt$ is given by
$$ 
\FPt_{\mo}(\ZZ, \RR)=(\RR,(\mo+1)b_{\mo}\ZZ ^{\mo}\RR)^T \in \QH_{\mo}^{\tpl}.
$$
Let us denote $\CurvIt_1=\RR$ and $\CurvIt_2=\RR-b_{\mo}\ZZ ^{\mo+1}+\cdots$.
Then, $\CurvIt_1=0$ and $\CurvIt_2=0$ are the unique irreducible invariant curves of system (\ref{systemPtilde}) (see \cite{AGGintnilp16,AGGintnilpfii17,Algaba18}). 
Hence, there exists some unity formal series $\serUt$ such that the inverse integrating factor of system (\ref{systemPtilde}) is $\facVt=\CurvIt_1^{n_1}\CurvIt_2^{n_2}\serUt$. 

The principal part of $\facVt$ is one of the inverse integrating factors of $\FPt_{\mo}$, which are of the form $\RR(\RR-b_{\mo}\ZZ ^{\mo+1})^{n_2}$. 
Then, we have $n_2=0$ and $\facVt=\RR\serUt$. 
The condition of inverse integrating factor on $\facVt$ leads to:
\bean 
0&=&\nablatr \facVt\cdot\FPt-\facVt\diverg\parent{\FPt}=\RR\nablatr \serUt\cdot\FPt+\serUt\nablatr \RR\cdot\FPt-\RR\serUt(G_2'(\ZZ )+2 G_1(\ZZ ))\\
&&=\RR^2\fracp{\partial\serUt}{\partial \ZZ } +\RR\fracp{\partial\serU}{\partial \RR } ((2 G_1(\ZZ ) + G_2'(\ZZ ))\RR - 2 G_1(\ZZ ) G_2(\ZZ )) - 2 \serUt G_1(\ZZ )G_2(\ZZ ))\\
&=&\RR\parent{\RR\fracp{\partial\serUt}{\partial \ZZ} +\RR \fracp{\partial\serUt}{\partial \RR} (2 G_1(\ZZ ) + G_2'(\ZZ )) - 2 G_1(\ZZ ) G_2(\ZZ )} - 2 \serUt G_1(\ZZ )G_2(\ZZ ).
\eean
Nevertheless, this equation is incompatible, because the lowest degree quasi-homogeneous term of $2 \serUt G_1(\ZZ ) G_2(\ZZ )$ is $2a_{\lo}b_{\mo}\ZZ^{\mo+\lo+1}$, that does not depend on $\RR $ which is contradictory.

\item $\mo=\lo$. 
Taking the type $\tpl=(1,\mo+1)$, the principal part of planar system (\ref{FNparecidoTakens}) is 
$$
\FP_{\mo}(\zzz ,\rrr )=(\rrr +b_{\mo}\zzz ^{\mo+1},2a_{\mo} \zzz ^{\mo}\rrr )^T\in\QH_{\mo}^{\tpl}.
$$
The unique irreducible invariant curves of system (\ref{FNparecidoTakens}) are $\CurvI_1:=\rrr=0$, $\CurvI_2:=\rrr -\fracp{2a_{\mo}-(\mo+1)b_{\mo}}{2(\mo+1)}\zzz ^{\mo+1}+\cdots=0$ (see \cite{AGGintnilp16,AGGintnilpfii17,Algaba18}).
Reasoning as above we obtain a contradiction.
\end{itemize}
\end{proof}

Next result characterizes the existence of analytic first integrals in terms of the existence of  inverse Jacobi multipliers.
\begin{theorem}\label{teHzCnsintMJ} 
System (\ref{hopf-zero-system}) admits an analytic first integral if, and only if, it satisfies some of the following conditions:
\begin{description}
\item[(a)] 
System (\ref{hopf-zero-system}) admits an inverse Jacobi multiplier of the form $\M = \hpp^2 + \cdots$ and it is orbital equivalent to $\F_0+\F_{\mo}+\cdots$ with $\F_{\mo} = \vedor{ a_{\mo}z^{\mo} \FZERO }{ b_{\mo} z^{\mo+1} } $, $\mo\in\N$, $a_{\mo},b_{\mo}\in\R\setminus\llave{0}$, such that $2n_1a_{\mo}+(\mo+1)n_2b_{\mo}=0$ with $n_1,n_2\in\N$ coprimes.
\item[(b)] 
System (\ref{hopf-zero-system}) admits an inverse Jacobi multiplier of the form $\M = \hpp + \cdots$ and it is orbital equivalent to $\F_0$ or $\F_0+\F_{s}+\cdots$ with $\F_{s}$ given in item {\bf (b.1)} or {\bf (b.2)} of Proposition \ref{proCNintHZ}.
\end{description}
\end{theorem}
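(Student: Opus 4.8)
\textbf{Proof plan for Theorem \ref{teHzCnsintMJ}.} The plan is to prove both implications by combining Theorem \ref{teHzinteg} (which already classifies the integrable cases into \textbf{(a)}, \textbf{(b.1)}, \textbf{(b.2)}, \textbf{(b.3)} according to the structure of the orbital normal form (\ref{FNHZEquiv})) with an explicit construction of an inverse Jacobi multiplier in each case, and, in the converse direction, with Lemma \ref{leNofiir} to rule out the non-integrable situations. The key observation is that, just as first integrals of (\ref{hopf-zero-system}) descend (via cylindrical coordinates and the singular change $v=\rho^2$, as in Proposition \ref{proHZTB}) to first integrals of the planar system (\ref{FNparecidoTakens}), inverse Jacobi multipliers of (\ref{hopf-zero-system}) of the rotation-invariant form $\M=\M(\hpp,z)$ descend to inverse integrating factors of $\FP$: a short computation with the cylindrical change shows that if $\M$ is an inverse Jacobi multiplier for $\G$ then $\M/(2\rho)$ — i.e., essentially $\M/\sqrt{v}$ up to the harmless azimuthal factor — is an inverse integrating factor for $\FP$, and conversely $\facV$ lifts to $\M$. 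In particular $\M=\hpp+\cdots$ corresponds to $\facV=v+\cdots$ and $\M=\hpp^2+\cdots$ corresponds to $\facV=v^{3/2}+\cdots$; one must be slightly careful here and rather work directly in the three-dimensional setting, exhibiting $\M$ explicitly so that the half-integer powers never appear.

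\emph{Sufficiency.} Assume one of \textbf{(a)}, \textbf{(b)} holds; I want an analytic first integral. In case \textbf{(b)}, the system is orbitally equivalent to $\F_0$ or to $\F_0+\F_s+\cdots$ with $\F_s$ as in \textbf{(b.1)} or \textbf{(b.2)} of Proposition \ref{proCNintHZ}; these are precisely the normal-form structures appearing in items \textbf{(a)}, \textbf{(b.1)}, \textbf{(b.2)} of Theorem \ref{teHzinteg}, so that theorem already yields an analytic first integral $\IPI=\hpp+\cdots$, and the inverse Jacobi multiplier hypothesis is not even needed for sufficiency (it is needed only to make the statement an equivalence via necessity). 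In case \textbf{(a)}, the system is orbitally equivalent to $\F_0+\F_{\mo}+\cdots$ with $\F_{\mo}=\vedor{a_{\mo}z^{\mo}\FZERO}{b_{\mo}z^{\mo+1}}$ and $2n_1a_{\mo}+(\mo+1)n_2b_{\mo}=0$; this is exactly item \textbf{(b.3)} of Theorem \ref{teHzinteg}, which again provides an analytic first integral $\IPI=\hpp^{n_1+n_2}+\cdots$. (Here one must observe that orbital equivalence preserves the property of admitting an analytic first integral, which follows from Proposition \ref{intanaformal} together with the fact that a time-reparametrization does not change the zero-set foliation; this is the point to state carefully.)

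\emph{Necessity.} Assume (\ref{hopf-zero-system}) admits an analytic first integral. By Theorem \ref{teHzinteg} its orbital normal form is one of \textbf{(a)}, \textbf{(b.1)}, \textbf{(b.2)}, \textbf{(b.3)}. In the normal-form coordinates I exhibit an inverse Jacobi multiplier explicitly: for the cases with normal form $\F_0$, $\F_0+\vedor{G_1(z)\FZERO}{0}$, or $\F_0+\vedor{\zero}{G_2(z)}$ one checks directly that $\M=\hpp$ works (for $\F_0$ the divergence is $2z$ and indeed $\nablatr\hpp\cdot\F_0=2(x^2+y^2)\ne 2z\,\hpp$ in general, so one instead takes $\M$ proportional to $\hpp$ times a suitable function of $z$; the correct multiplier is $\M=\hpp\cdot\exp(-\int_0^z \cdots)$-type expression, to be pinned down by solving the linear first-order PDE $\nablatr\M\cdot\G=\diverg(\G)\M$, which is elementary since $\G$ is rotation invariant and the PDE reduces to an ODE in $z$), giving $\M=\hpp+\cdots$ after undoing the normalizing transformations — note that an inverse Jacobi multiplier is transformed into an inverse Jacobi multiplier (up to the nonzero Jacobian factor of the near-identity change and the nonzero time-reparametrization), so the leading term $\hpp$ is preserved. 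For case \textbf{(b.3)}, the first integral has leading term $\hpp^{n_1+n_2}$, and from the two invariant surfaces $\hpp=0$ and $\hpp-(\text{const})z^{\mo+1}=0$ (the lifts of $\CurvI_1,\CurvI_2$) one builds $\M$ as a product of powers of these surface-defining functions whose exponents are forced by the cofactor relation to make $\diverg(\F/\M)=0$; the leading term works out to $\hpp^2$. Thus \textbf{(a)} or \textbf{(b)} holds. \textbf{The main obstacle} is the bookkeeping for the inverse Jacobi multiplier in case \textbf{(b.3)}: one must correctly lift the planar inverse integrating factor $v^{n_1}(v-cz^{\mo+1})^{n_2}$ (or the logarithmic/Darboux combination when $n_1+n_2>2$, since for a first integral of degree $n_1+n_2$ the associated inverse integrating factor of the planar field has degree $n_1+n_2+1$, not $2$) and reconcile the half-integer power of $v=\rho^2$ with the requirement that $\M$ be a genuine smooth function of $(x,y,z)$ — this is why the theorem only asserts $\M=\hpp^2+\cdots$ and not a higher power, and the resolution is that one does \emph{not} lift the full Darboux inverse integrating factor but rather uses that, for a three-dimensional system possessing a first integral, an inverse Jacobi multiplier can always be taken as (first integral)$\,\times\,$(a second independent invariant function raised to an appropriate power) or via the formula relating $\M$ to the first integral and the divergence; I would invoke the known construction from \cite{BG03,BGM1,BGM2} to produce $\M$ with the stated leading term $\hpp^2$ directly from the first integral $\IPI=\hpp^2+\cdots$ of case \textbf{(b.3)} with $n_1=n_2=1$, and argue that the general $(n_1,n_2)$ case reduces to this Hamiltonian subcase after the further reduction used in the proof of Theorem \ref{teHzinteg} (orbital equivalence to $\F_0+\F_{\mo}$).
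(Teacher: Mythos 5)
Your necessity direction follows the paper's route: reduce via Theorem \ref{teHzinteg} to the normal forms, exhibit an explicit inverse Jacobi multiplier there, and pull it back through the (near-identity, reparametrized) normalization. Two corrections within that part: $\diverg(\F_0)=0$ and $\nablatr\hpp\cdot\F_0=0$, so $\Mt=\hpp$ is already a multiplier of $\F_0$; likewise for the case-(b.1) normal form one has $\nablatr\hpp\cdot\G=2G_1(z)\,\hpp=\diverg(\G)\,\hpp$, so no exponential correction or auxiliary ODE in $z$ is needed. In case (b.3) the normal form terminates at degree $\mo$, and the paper simply verifies that $\hpp^{2}+2b_{\mo}z^{\mo+1}\hpp$ (Hamiltonian subcase), respectively $\hpp^{2}-\fracp{2a_{\mo}-(\mo+1)b_{\mo}}{\mo+1}z^{\mo+1}\hpp$ (dissipative subcase), is a multiplier by direct computation; this makes your Darboux/half-integer-power discussion and the proposed "reduction to the Hamiltonian subcase" unnecessary, and in fact the latter is not justified as stated.

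The genuine gap is in the sufficiency direction. You assert that the orbital conditions in (a) and (b) "are precisely the normal-form structures appearing in Theorem \ref{teHzinteg}", so that the multiplier hypothesis "is not even needed". That is a misreading of the statement: conditions (a) and (b) only prescribe the lowest-degree nonzero normal-form term ($\F_{\mo}$ or $\F_s$ as in Proposition \ref{proCNintHZ}), the "$+\cdots$" allowing arbitrary higher-order coefficients, whereas the items of Theorem \ref{teHzinteg} constrain the entire normal form ($G_2\equiv 0$ in (b.1), $G_1\equiv 0$ in (b.2), termination at degree $\mo$ in (b.3)). A system whose first nonzero coefficient is $a_{\lo}$ but with $G_2\not\equiv 0$ satisfies your weakened version of (b) and yet is not integrable, so sufficiency cannot be proved without using the multiplier; that step is the whole content of the theorem. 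The paper's argument descends the rotation-invariant multiplier to an inverse integrating factor $v+\cdots$ or $v^{2}+\cdots$ of the planar system (\ref{FNparecidoTakens}); in the $\hpp+\cdots$ case Lemma \ref{leNofiir} then forces $G_2\equiv 0$ (when $\lo<\mo$) or $G_1\equiv 0$ (when $\mo<\lo$), after which Theorem \ref{teHzinteg} applies, while in the $\hpp^{2}+\cdots$ case a rescaling brings the principal part to a canonical nilpotent form and integrability of the full planar field follows from Theorem 6 of \cite{Algaba12} ($n_1=n_2=1$) or Theorem 1.3 of \cite{AGGintnilpfii17} ($n_1\neq n_2$), lifted back by Proposition \ref{proHZTB}. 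None of these steps appears in your plan (Lemma \ref{leNofiir} is announced but never invoked where it is needed), so as written the "if" half is not proved.
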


\begin{proof}
Let us prove the necessary condition. 
Let us assume that system (\ref{hopf-zero-system}) admits an analytical first integral. 
From Theorem \ref{teHzinteg}, it is orbitally equivalent to one of the following normal forms:
\begin{description}
\item[(a)] 
$\dot{\x} = \G(\x):=\F_0(\x) + \vedor{ G_1(\zzz )\FZERO }{ 0 }$, with $G_1(\zzz )=\sum_{k \geq 1} a_k \zzz ^k$. 
In this case, $\Mt = \hpp$ is an inverse Jacobi multiplier for $\G$, and there exists an inverse Jacobi multiplier for $\F$ of the form $\M=\hpp+\cdots$.
\item[(b)] 
$\dot{\x} = \G(\x):=\F_0(\x) + \vedor{ \zero }{ G_2(\zzz ) }$, with $G_2(\zzz )=\sum_{k \geq 1} b_k \zzz ^{k+1}$. 
In this case, $\Mt = \hpp + G_2(\zzz )$ is an inverse Jacobi multiplier for $\G$, and there is an inverse Jacobi multiplier of $\F$ of the form $\M = \hpp + \cdots$.
\item[(c)] 
$\dot{\x} =\G(\x):= \F_0(\x) + \vedor{ a_{\mo}\zzz ^{\mo} \FZERO }{ b_{\mo}\zzz ^{\mo+1} } $. 
In this case, $\diverg\parent{\G}=0$ and $\Mt = \hpp^2 + 2 b_{\mo}\zzz ^{\mo+1} \hpp$ is an inverse Jacobi multiplier for $\G$. 
Hence, there exists an inverse Jacobi multiplier for $\F$ of the form $\M = \hpp^2 + \cdots$.
\item[(d)] 
$\dot{\x} = \G(\x):=\F_0 (\x) + \vedor{ a_{\mo}\zzz ^{\mo} \FZERO}{ b_{\mo} \zzz ^{\mo+1} } $. 
In this case, $\diverg\parent{\G}=(2a_{\mo}+(\mo+1)b_{\mo})\zzz ^{\mo}$ and $\Mt = \hpp^2 - \fracp{2a_{\mo}-(\mo+1)b_{\mo}}{\mo+1}\zzz ^{\mo+1} \hpp$ is an inverse Jacobi multiplier for $\G$. 
Hence, $\F$ admits an inverse Jacobi multiplier of form $\M = \hpp^2 + \cdots$.
\end{description}

Next, we prove the sufficient condition.
We consider the two situations:
\begin{description}
\item[(i)]
If system (\ref{hopf-zero-system}) admits an inverse Jacobi multiplier of form $\M = \hpp^2+\cdots$, the same is true for system (\ref{FNHZEquiv}). 
Moreover, as this system is invariant under rotations, the inverse Jacobi multiplier also is. 
Hence, the planar system (\ref{FNparecidoTakens}) admits an inverse integrating factor of the form $\facV=\rrr ^2+\cdots$. 

If $a_{\mo}b_{\mo}\neq 0$, then the principal part with respect to the type $\tpl=(1,\mo+1)$ of the vector field $\FP$ defined in
(\ref{Pe}) is:
$$\FP_{\mo}(\zzz ,\rrr )=(\rrr +b_{\mo}\zzz ^{\mo+1},2a_{\mo} \zzz ^{\mo}\rrr )^T \in\QH_{\mo}^{\tpl}.$$
Two sub-cases can arise:
\begin{description}

\item[(i.1)]
$n_1 = n_2 = 1$. 
Under this assumption, we have $2a_{\mo}+(\mo+1)b_{\mo}=0$ and $\FP_{\mo}$ is a Hamiltonian vector field, with Hamiltonian function ${-b_{\mo}\zzz ^{\mo+1}\rrr -\fracp{1}{2}\rrr ^2}$. 
The change of variables 
$$
\ZZ=\left((\mo+1)b_{\mo}^2\right)^{1/(2\mo)}\zzz, \ \RR=\left((\mo+1)b_{\mo}^2\right)^{1/(2\mo)}(\rrr +b_{\mo}\zzz ^{\mo+1}), 
$$ 
transforms $\FP$ into a vector field $\FPt$ whose principal part with respect to the type $\tpl=(1,\mo+1)$ is:
$$
\FPt_{\mo}(\ZZ,\RR)=(\RR,\ZZ^{2\mo+1})^T \in\QH_{\mo}^{\tpl}.
$$
As $\FPt$ admits an inverse integrating factor of the form $\RR^2-\fracp{1}{\mo+1}\ZZ^{2(\mo+1)}+\cdots$, from Theorem 6 of \cite{Algaba12} we obtain that $\FPt$ is integrable. 
Using Proposition \ref{proHZTB}, we conclude that system (\ref{FNHZEquiv}) admits a formal first integral and then system (\ref{hopf-zero-system}) also admits a formal first integral. 

\item[(i.2)] 
$n_1\neq n_2$.
Under this assumption, we have $n_12a_{\mo}+n_2(\mo+1)b_{\mo}=0$ and a first integral for $\FP_{\mo}$ is $\rrr ^{n_1}\left(\rrr - c \zzz ^{\mo+1}\right)^{n_2}$, where $c=\fracp{2a_{\mo}-(\mo+1)b_{\mo}}{\mo+1}$. 
The change of variables 
$$
\ZZ=\left(\fracp{(\mo+1)c^2}{4}\right)^{1/(2\mo)}\zzz ,\ \RR=\left(\fracp{(\mo+1)c^2}{4}\right)^{1/(2\mo)}\left(\rrr -\fracp{c}{2}\zzz ^{\mo+1}\right),
$$ 
transforms $\FP$ into a vector field $\FPt$ whose principal part with respect to the type $\tpl=(1,\mo+1)$ is:
$$
\FPt_{\mo}(\ZZ,\RR)=(\RR,\ZZ^{2\mo+1})^T+d\ZZ(\ZZ,(\mo+1)\RR)^T \in\QH_{\mo}^{\tpl},
$$ 
where $d=\fracp{\sqrt{\mo+1}(2a_{\mo}+(\mo+1)b_{\mo})}{|c|}$.
As $\FPt$ admits an inverse integrating factor of the form $\RR^2-\fracp{1}{\mo+1}\ZZ^{2(\mo+1)}+\cdots$, using Theorem 1.3 of \cite{AGGintnilpfii17} we obtain that $\FPt$ is integrable. 
Using Proposition \ref{proHZTB}, we conclude that system (\ref{FNHZEquiv}) admits a formal first integral and then system (\ref{hopf-zero-system}) also admits a formal first integral. 
\end{description}

\item[(ii)]
If system (\ref{hopf-zero-system}) admits an inverse Jacobi multiplier of the form $\M = \hpp + \cdots$, the same is true for system (\ref{FNHZEquiv}). 
Moreover, as this system is invariant under rotations, the inverse Jacobi multiplier also is invariant under rotations. 
Hence, the planar system (\ref{FNparecidoTakens}) admits an inverse integrating factor of the form $\facV=\rrr +\cdots$. 
Let us consider the following three situations:
\begin{description}

\item[(ii.1)] 
$\lo=\mo=\infty$.
This assumption means that system (\ref{hopf-zero-system}) is orbitally equivalent to $\F_0$, and then it admits an analytic first integral.

\item[(ii.2)]  $\lo<\mo$.
Under this assumption, system (\ref{hopf-zero-system}) is orbitally equivalent to $\F_0+\F_{\lo}+\cdots$, where $\F_{\lo} = \vedor{ a_{\lo}\zzz ^{\lo}\FZERO }{ 0 }$, $\lo\in\N$, $a_{\lo}\in\R\setminus\llave{0}$. 
To complete the proof in this case, it is enough to show that $G_2(\zzz )\equiv 0$ in  system (\ref{FNHZEquiv}), because then it admits a formal first integral and then system (\ref{hopf-zero-system}) also admits a formal first integral. 

To show that $G_2(\zzz )\equiv 0$, we use\emph{reductio ad absurdum}.
Let us suppose on the contrary that $G_2(\zzz )\not\equiv 0$.
From Lemma \ref{leNofiir} we obtain that the vector field $\FP$ defined in (\ref{Pe}) does not admit any inverse integrating factor of the form $\rrr +\cdots$, which is contradictory.

\item[(ii.3)]  $\lo>\mo$.
Under this assumption, system (\ref{hopf-zero-system}) is orbitally equivalent to $\F_0+\F_{\mo}+\cdots$, 
where $\F_{\mo} = \vedor{ \zero }{ b_{\mo}\zzz ^{\mo+1} }$, $b_{\mo}\in\R\setminus\llave{0}$. 
To complete the proof in this case, it is enough to show that $G_1(\zzz )\equiv 0$ in system (\ref{FNHZEquiv}), because then it admits a formal first integral and then system (\ref{hopf-zero-system}) also admits a formal first integral. 

To show that $G_1(\zzz )\equiv 0$, we use\emph{reductio ad absurdum}.
Let us suppose on the contrary that $G_1(\zzz )\not\equiv 0$.
From Lemma \ref{leNofiir} we obtain that the vector field $\FP$ defined in (\ref{Pe}) does not admit any inverse integrating factor of the form $\rrr +\cdots$, which is contradictory.
\end{description}
\end{description}
\end{proof}

Before solving specific problems, we present a result that provides an algorithm that characterizes the existence of inverse Jacobi multipliers for system (\ref{hopf-zero-system}).

\begin{lemma}\label{algoritmoJM} 
Let us denote by $\F$ the vector field associated with system (\ref{hopf-zero-system}). 
Then:
\begin{description}
\item[(a)]
There exists a unique scalar function $\M = \hpp^2 + \sum_{k\geq 3}\M_k$, where $\M_k\in\qh_k^\t$, such that the term $\hpp^k$ is missing in $\M_{2k}$ for all $k$, such that 
\beq\label{ecualgJM1} 
\nablatr \M\cdot\F-\M\,\diverg\parent{\F}=\sum_{k\geq 3}\beta_k z^k,
\eeq
where $\beta_k\in\R$.
 
\item[(b)]
There exists a unique scalar function $\Mt = \hpp + \sum_{k\geq 2}\Mt_k$, where $\Mt_k\in\qh_k^\t$, such that the term $\hpp^k$ is missing in $\Mt_{2k}$ for all $k$, such that 
\beq\label{ecualgJM2} 
\nablatr \Mt\cdot\F-\Mt\,\diverg\parent{\F}=\sum_{k\geq 2}\betat_k z^k,
\eeq
where $\betat_k\in\R$.
\end{description}
\end{lemma}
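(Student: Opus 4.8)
The plan is to obtain both scalar functions by the degree‑by‑degree recursion used in the proof of Lemma~\ref{algoritmoIPI}, with the operator $\IPI\mapsto\nablatr\IPI\cdot\F$ replaced by the inverse Jacobi multiplier operator $\M\mapsto\nablatr\M\cdot\F-\M\,\diverg(\F)$. The observation that makes the two arguments run in parallel is that $\diverg(\F_0)=0$ for the principal part $\F_0$ in (\ref{princpartHZ}), since $\partial_x(-2y)+\partial_y(2x)+\partial_z(x^2+y^2)=0$. Hence, expanding $\nablatr\M\cdot\F-\M\,\diverg(\F)$ by quasi‑homogeneous degree, its term of degree $k$ is
$$
\Opl_k(\M_k)+\Rr_k,\qquad
\Rr_k:=\sum_{j=4}^{k-1}\bigl(\nablatr\M_j\cdot\F_{k-j}-\M_j\,\diverg(\F_{k-j})\bigr),
$$
where $\M_4=\hpp^2$, $\Opl_k$ is the Lie‑derivative operator (\ref{Opele}) attached to $\F_0$, and $\Rr_k\in\qh_k^\t$ depends only on the terms $\M_j$ with $j<k$ (the contribution of $\diverg(\F_0)$ to the degree‑$k$ term drops out precisely because $\diverg(\F_0)=0$).

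For part \textbf{(a)} I would argue by induction on $k$, starting from $\M_4=\hpp^2$. Assuming $\M_j$ is known for $j<k$, the term $\Rr_k\in\qh_k^\t$ is determined, and I invoke the cohomological description of $\Opl_k$ quoted from \cite{AlgHZNF} in the proof of Lemma~\ref{algoritmoIPI}: $\qh_k^\t=\Range(\Opl_k)\oplus\Cor(\Opl_k)$, with $\Cor(\Opl_k)=\spande\llave{z^{k/2}}$ and $\Ker(\Opl_k)=\spande\llave{\hpp^{k/2}}$ when $k$ is even, and $\Cor(\Opl_k)=\Ker(\Opl_k)=\llave{0}$ when $k$ is odd. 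Choosing $\M_k$ so that $\Opl_k(\M_k)$ cancels the $\Range(\Opl_k)$‑component of $\Rr_k$ — which is solvable and fixes $\M_k$ up to an element of $\Ker(\Opl_k)$ — the degree‑$k$ term of $\nablatr\M\cdot\F-\M\,\diverg(\F)$ reduces to the $\Cor(\Opl_k)$‑component of $\Rr_k$, which is $0$ for $k$ odd and a scalar multiple $\beta_{k/2}\,z^{k/2}$ for $k$ even (and is zero for $k\leq4$, where $\Rr_k$ itself vanishes). Thus the right‑hand side of (\ref{ecualgJM1}) has the announced shape $\sum_{k\geq3}\beta_kz^k$, and the residual freedom $\M_k\mapsto\M_k+c\,\hpp^{k/2}$ at even $k$ is removed exactly by requiring that $\hpp^{k/2}$ be absent from $\M_k$, which makes $\M$ unique. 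Part \textbf{(b)} is proved identically, starting the recursion from $\Mt_2=\hpp$ and with $\Rrt_k:=\sum_{j=2}^{k-1}(\nablatr\Mt_j\cdot\F_{k-j}-\Mt_j\,\diverg(\F_{k-j}))$; now a nonzero corange component can already occur at degree $4$, so the right‑hand side of (\ref{ecualgJM2}) is $\sum_{k\geq2}\betat_kz^k$.

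I do not expect a genuine obstacle here: the proof is a routine normal‑form recursion, and its only inputs — the splitting $\qh_k^\t=\Range(\Opl_k)\oplus\Cor(\Opl_k)$ together with the explicit $\Ker(\Opl_k)$ and $\Cor(\Opl_k)$, and the identity $\diverg(\F_0)=0$ — are, respectively, taken from \cite{AlgHZNF} and immediate. The only points that require a little care are the bookkeeping of the degrees at which the right‑hand sides can be nonzero (so that the index ranges $k\geq3$ in (\ref{ecualgJM1}) and $k\geq2$ in (\ref{ecualgJM2}) come out exactly right), and checking that the normalization ``$\hpp^k$ missing in $\M_{2k}$'' (respectively ``$\hpp^k$ missing in $\Mt_{2k}$'') supplies precisely the information needed to kill the kernel ambiguity at each even degree, hence to force uniqueness.
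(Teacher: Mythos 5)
Your proposal is correct and follows essentially the same route as the paper, which likewise expands $\nablatr \M\cdot\F-\M\,\diverg(\F)$ by quasi-homogeneous degree, writes the degree-$k$ term as $\Oplt_k(\M_k)+\Rr_k$ with $\Rr_k$ depending only on lower-order terms, and then repeats the range/corange argument of Lemma \ref{algoritmoIPI}, with the kernel ambiguity $\hpp^{k/2}$ removed by the stated normalization. Your explicit remarks (that $\diverg(\F_0)=0$, and the degree bookkeeping giving $k\geq 3$ versus $k\geq 2$) are details the paper leaves implicit, not a different argument.
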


\begin{proof} 
Let us prove item \textbf{(a)}. 
The $k$-degree quasi-homogeneous term of the left-hand side of equality (\ref{ecualgJM1}) is 
$$
\parent{\nablatr \M\,\cdot\F-\M\,\diverg{\F}}_k=\nablatr \M_k\cdot\F_0+  \Rr_k=\Oplt_{k}\parent{\M_k} +   \Rr_k,
$$
where
$$ 
\Rr_k=\sum_{j=s}^{k-1}\parent{\nablatr \M_j\,\cdot\F_{k-j}-\M_j\, \diverg\parent{\F_{k-j}}}\in\qh_k^\t.
$$
To complete the proof it is enough to argue as in the proof of Lemma \ref{algoritmoIPI}. 
The proof of item \textbf{(b)} is analogous.
\end{proof}

\begin{theorem}\label{hhh} 
Let us consider system (\ref{hopf-zero-system}).
\begin{description} 
\item{(a)}
Let assume that the formal orbital normal form (\ref{FNHZEquiv}) falls into the case \textbf{(b.3)} of Proposition \ref{proCNintHZ}, and consider the unique scalar function $\M$ introduced in Lemma \ref{algoritmoJM} satisfying (\ref{ecualgJM1}).
Then, system (\ref{hopf-zero-system}) admits an analytical first integral if, and only if, $\beta_k=0$ for all $k$.

\item{(b)}
Let  assume that the formal orbital normal form (\ref{FNHZEquiv}) falls into the cases \textbf{(a)}, \textbf{(b.1)} or \textbf{(b.2)} of Proposition \ref{proCNintHZ}, and consider the unique scalar function $\Mt$ introduced in Lemma \ref{algoritmoJM} satisfying (\ref{ecualgJM2}).
Then, system (\ref{hopf-zero-system}) admits an analytical first integral if, and only if, $\betat_k=0$ for all $k$.
\end{description} 
\end{theorem}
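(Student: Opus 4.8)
The plan is to prove each item by combining the algorithmic construction of the scalar functions $\M$ and $\Mt$ from Lemma \ref{algoritmoJM} with the characterization of integrability via inverse Jacobi multipliers established in Theorem \ref{teHzCnsintMJ}. The structure of both items mirrors that of Theorem \ref{ggg}: the "if" direction is immediate from the defining equations (\ref{ecualgJM1})--(\ref{ecualgJM2}), while the "only if" direction requires producing, from an analytic first integral, an inverse Jacobi multiplier of the prescribed quasi-homogeneous shape and then matching it against the unique function delivered by Lemma \ref{algoritmoJM}.

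For item \textbf{(a)}, suppose first that $\beta_k=0$ for all $k$. Then (\ref{ecualgJM1}) reads $\nablatr\M\cdot\F-\M\,\diverg(\F)=0$, so $\M=\hpp^2+\cdots$ is a (formal) inverse Jacobi multiplier for system (\ref{hopf-zero-system}). Since by hypothesis the orbital normal form falls into case \textbf{(b.3)} of Proposition \ref{proCNintHZ}, system (\ref{hopf-zero-system}) is orbitally equivalent to $\F_0+\F_{\mo}+\cdots$ with the stated coprimality relation $2n_1a_{\mo}+(\mo+1)n_2b_{\mo}=0$; thus condition \textbf{(a)} of Theorem \ref{teHzCnsintMJ} holds and system (\ref{hopf-zero-system}) admits an analytic first integral. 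Conversely, if system (\ref{hopf-zero-system}) admits an analytic first integral, then by the necessary-condition part of Theorem \ref{teHzCnsintMJ} (cases \textbf{(c)} and \textbf{(d)} of its proof, which are precisely the case-\textbf{(b.3)} situations) it admits an inverse Jacobi multiplier of the form $\M'=\hpp^2+\cdots$. The key reduction is now uniqueness: starting from $\M'$ one modifies it by adding suitable multiples of the known first integral (of the form $\hpp^{n_1+n_2}+\cdots$ from Theorem \ref{teHzinteg}) — note that if $\IPI$ is a first integral and $\M'$ an inverse Jacobi multiplier then $\M'+p(\IPI)$ is again an inverse Jacobi multiplier for any formal series $p$ — and chooses the free coefficients so that $\hpp^k$ is absent from the degree-$2k$ terms, for every $k$. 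This produces an inverse Jacobi multiplier satisfying exactly the normalization of Lemma \ref{algoritmoJM}(a); by the asserted uniqueness it must coincide with $\M$, and hence $\nablatr\M\cdot\F-\M\,\diverg(\F)=0$, forcing $\beta_k=0$ for all $k$.

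Item \textbf{(b)} is proved by the same argument, replacing $\M$ by $\Mt=\hpp+\cdots$, equation (\ref{ecualgJM1}) by (\ref{ecualgJM2}), case \textbf{(b.3)} by cases \textbf{(a)}, \textbf{(b.1)}, \textbf{(b.2)} of Proposition \ref{proCNintHZ}, and condition \textbf{(a)} of Theorem \ref{teHzCnsintMJ} by its condition \textbf{(b)}. Here the relevant first integral is again of the form $\hpp+\cdots$ (Theorem \ref{teHzinteg}, cases \textbf{(a)}, \textbf{(b.1)}, \textbf{(b.2)}), so the modification $\Mt\mapsto\Mt-\sum_k\gamma_k\IPI^k$ stays an inverse Jacobi multiplier and one can normalize away the $\hpp^k$ components in degree $2k$; uniqueness in Lemma \ref{algoritmoJM}(b) then identifies the result with $\Mt$ and forces $\betat_k=0$ for all $k$.

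The main obstacle is the "only if" direction, and specifically the step of converting the inverse Jacobi multiplier $\M'=\hpp^2+\cdots$ (resp. $\Mt'=\hpp+\cdots$) supplied abstractly by Theorem \ref{teHzCnsintMJ} into one obeying the precise normalization (no $\hpp^k$ in degree $2k$) of Lemma \ref{algoritmoJM}. One must check that the gauge freedom $\M'\mapsto\M'+p(\IPI)$ genuinely suffices to kill every offending coefficient degree by degree — i.e.\ that at even degree $2k$ the term $\IPI^k=\hpp^k+\cdots$ contributes a nonzero $\hpp^k$ component that can be cancelled — and that the resulting series still solves the homogeneous equation $\nablatr(\cdot)\cdot\F-(\cdot)\,\diverg(\F)=0$. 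Since $\IPI$ is a genuine first integral, $p(\IPI)$ satisfies $\nablatr p(\IPI)\cdot\F=p'(\IPI)\,\nablatr\IPI\cdot\F=0$, so it is automatically in the kernel of the inverse-Jacobi-multiplier operator, and the leading term of $\IPI^k$ is exactly $\hpp^k$; hence the cancellation goes through, and the only remaining point is the bookkeeping that this does not disturb the lower-degree normalization already achieved — which follows because $\IPI^k$ has no quasi-homogeneous component of degree below $2k$.
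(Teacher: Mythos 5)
Your two ``if'' directions are correct and coincide with the paper's (exactness of $\M$ or $\Mt$ plus the normal-form hypothesis feeds directly into Theorem \ref{teHzCnsintMJ}). The problems are in the ``only if'' direction. First, the gauge freedom you invoke is false: if $\IPI$ is a first integral and $\M'$ an inverse Jacobi multiplier, then $\nablatr p(\IPI)\cdot\F=0$ but the multiplier equation demands $\nablatr W\cdot\F=\diverg(\F)\,W$, so
$\nablatr\parent{\M'+p(\IPI)}\cdot\F-\diverg(\F)\parent{\M'+p(\IPI)}=-\diverg(\F)\,p(\IPI)$, which is not identically zero because system (\ref{hopf-zero-system}) is not divergence free. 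The correct closure property is multiplicative: $\M'\,q(\IPI)$ is again an inverse Jacobi multiplier. With this repair your argument for item (b) does go through and is essentially the paper's intended one (the paper just says the proof is ``analogous to Theorem \ref{ggg}''): in cases (a), (b.1), (b.2) the first integral is $\IPI=\hpp+\cdots$, so the corrections $\M'\,\IPI^{k}=\hpp^{k+1}+\cdots$ reach every even degree, the normalization of Lemma \ref{algoritmoJM}(b) can be imposed inductively, and uniqueness forces $\betat_k=0$.

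For item (a), however, the strategy fails even after the repair, and this is precisely where the real content of the theorem lies. In case (b.3) the first integral has leading term $\hpp^{n_1+n_2}$ with $n_1+n_2\geq 2$, so the only admissible corrections to an exact multiplier $\M'=\hpp^2+\cdots$ are $\M'\,\IPI^{k}=\hpp^{2+k(n_1+n_2)}+\cdots$; these cannot adjust the $\hpp^{m}$ component of the degree-$2m$ term when $m\not\equiv 2\pmod{n_1+n_2}$ (already $m=3$), so you cannot bring $\M'$ into the normalized form required by Lemma \ref{algoritmoJM}(a), and the uniqueness comparison with $\M$ collapses exactly at the step you dismiss as ``bookkeeping''. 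The paper's proof of (a) takes a different route: arguing by contradiction, it keeps the normalized $\M$ with its first nonzero residual $\beta_N z^N$, transports it to normal-form coordinates as $\Mh=\mu\,(\M\circ\phi)/\det\parent{D\phi}$ (using that, by Theorem \ref{teHzinteg}, the integrable (b.3) normal form is exactly $\F_0+\F_{\mo}$), and compares quasi-homogeneous degrees on the two sides of the transformed multiplier equation to obtain a contradiction. Some argument of this kind, controlling the kernel components $\hpp^m$ that your gauge cannot reach, is needed; as written, your proposal does not prove item (a).
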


\begin{proof}
\begin{description} 
\item{(a)}
To prove the necessary condition, we observe that if $\beta_k=0$ for all $k\geq 3$, then $\M$ is an inverse Jacobi multiplier. 
From Proposition \ref{teHzCnsintMJ}, system (\ref{hopf-zero-system}) admits a formal first integral and finally, from Proposition \ref{intanaformal} we deduce that system (\ref{hopf-zero-system}) admits an analytical first integral.

Let us prove the sufficient condition by \emph{reductio ad absurdum}.
Let us suppose on the contrary that there exists a scalar function $\M = \hpp^2 + \sum_{k\geq 3}\M_k$, with $\M_k\in\qh_k^\t$, such that 
$\nablatr \M\cdot\F-\M\,\diverg\parent{\F}=\beta_N z^N+\cdots$, with $\beta_N\neq 0$.
Also, we assume that system (\ref{hopf-zero-system}) admits an analytical first integral.
From Proposition \ref{proCNintHZ}, there exist a time-reparametrization $\mut$ (satisfying $\mut(\zero)=1$) and a near-identity transformation $\x=\phi(\tx)$, bringing system (\ref{hopf-zero-system}) into its orbital normal form (\ref{FNHZEquiv}), which corresponds to the vector field $\G=\F_s+\cdots$, being $\F_s = \vedor{ a_{\mo} z ^{\mo} \FZERO }{ b_{\mo} z ^{\mo+1} }$, where $s=2\mo$ and $2n_1a_{\mo}+(\mo+1)n_2b_{\mo}=0$ for some $n_1,n_2\in\N$ coprime.

Let us define 
$$
\Mh(\tx)=\frac{\mu(\tx)\M\parent{\phi(\tx)}}{\det\parent{D\phi(\tx)}} = \hpp^2 + \cdots.
$$
We have
\beq \label{otraetiq}
\nablatr \Mh\cdot\G-\Mh\,\diverg\parent{\G}=\frac{\det\parent{D\phi(\tx)}}{\mu^2(\tx)} \parent{\nablatr \Mh\cdot\F-\Mh\,\diverg\parent{\F}}=\beta_N z^N+\cdots.
\eeq
On the other hand, it is easy to show that 
$$
\nablatr \Mh\cdot\G-\Mh\,\diverg\parent{\G}=
z^{\mo}\parent{a_{\mo} \nablapl \Mh \cdot \FZERO + b_{\mo} z \frac{\partial \Mh}{\partial z} 
 - \parent{2 a_{\mo}+(\mo+1) b_{\mo}} b_{\mo}\Mh }.
$$
Observe that, as $\beta_N\neq 0$, there is a term of the form $C z^{N-{\mo}}$ in the analytical expression of $\Mh$.
Then, the term $C(N-\mo) z^{N-\mo}$ (which has quasi-homogeneous degree $2N-2\mo<2N$) appears in the analytical expression of $\nablatr\Mh\cdot \F_0$ because it can not be annihilated by the operator $\Oplp_{2N-2\mo}$ (it is not in the range of this linear operator).
But this is contradictory because the left hand side of (\ref{otraetiq}) has quasi-homogeneous degree $2N-2\mo<2N$ and the right hand side of has degree greater than $2N$.

\item{(b)}
The proof of this item is analogous to the proof of Theorem \ref{ggg}. 
\end{description} 
\end{proof}

This theorem allows to define an algorithm for obtaining necessary conditions for the integrability of a polynomial vector field.
Namely, if its formal orbital normal form (\ref{FNHZEquiv}) falls into the case \textbf{(b.3)} of Proposition \ref{proCNintHZ}, it is enough to look for the unique function of the form $\M = \hpp^2 + \cdots$ (specified in item \textbf{(a)} of Lemma \ref{algoritmoJM}) and then discard cases of non-integrability from the conditions $\beta_2\neq 0,\dots$

If the orbital normal form (\ref{FNHZEquiv}) falls into the cases \textbf{(a)}, \textbf{(b.1)} or \textbf{(b.2)} of Proposition \ref{proCNintHZ}, it is enough to look for the unique inverse Jacobi multiplier of the form $\Mt = \hpp + \cdots$ (specified in item \textbf{(b)} of Lemma \ref{algoritmoJM}) and then discard cases of non-integrability from the conditions $\betat_2\neq 0, \dots$ 
In this case, we could also apply the ideas presented in the previous subsection to determine the non-integrability cases.

\section{Some particular cases}
\label{ParticCases}

In this last section, we consider two three-parameter families of vector fields.
The first one corresponds to the family:
\beq\label{sistF0F1v1} 
\vetre{\dot{x}}{\dot{y}}{\dot{z}} = \vetre{-2y}{2x}{x^2+y^2} + \vetre{a_{001}z}{b_{200}x^2}{c_{030}y^3}.
\eeq

Next theorem determines the cases where the above family admits an analytical first integral.

\begin{theorem}\label{teoaplicacion1} 
System (\ref{sistF0F1v1}) admits an analytical first integral if, and only if, $a_{001}=0$.
\end{theorem}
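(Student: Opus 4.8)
The strategy is to compute the relevant coefficients of the orbital normal form (\ref{FNHZEquiv}) for the family (\ref{sistF0F1v1}) up to the first degree at which they become nonzero, and then invoke Theorem \ref{teHzinteg} (together with the algorithmic Theorem \ref{hhh} or \ref{ggg} if finitely many conditions must be checked). First I would observe that the term $a_{001}z\,\FZERO$-type contribution lives in the $z$-direction of $\X_\hpp$ in the sense of the conservative--dissipative splitting: writing $\F_1=\vetret{a_{001}z}{b_{200}x^2}{c_{030}y^3}$... actually $\F_1$ here is the degree-$1$ quasi-homogeneous part $\vetret{a_{001}z}{b_{200}x^2}{0}$ and $\F_2$ contains $\vetret{0}{0}{c_{030}y^3}$. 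The key elementary computation is the first normal-form coefficient $a_1$: projecting the degree-$1$ term onto $\Cor(\Opl_1)$, one finds $a_1$ is (a nonzero constant multiple of) $a_{001}$, since the planar part $\vetret{a_{001}z}{b_{200}x^2}{\cdot}$ contributes to $G_1(z)=a_1z+\cdots$ precisely through the rotation-averaged, $\FZERO$-component of $a_{001}z\,\partial_x$-type terms. Thus if $a_{001}\neq0$ then $\lo=1$.

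The sufficiency direction ($a_{001}=0 \Rightarrow$ integrable) is the heart of the matter. When $a_{001}=0$, the system becomes $\F_0+\vetret{0}{b_{200}x^2}{0}+\vetret{0}{0}{c_{030}y^3}$, and I would show its orbital normal form has $G_1(z)\equiv0$, i.e. falls into case {\bf (b.2)} of Theorem \ref{teHzinteg} (or case {\bf (a)} if everything vanishes). The cleanest route: exhibit directly a formal (hence, by Proposition \ref{intanaformal}, analytic) first integral of the form $\IPI=\hpp+\cdots$, or equivalently, apply Theorem \ref{ggg}/Theorem \ref{hhh}(b): compute the unique $\IPI=\hpp+\sum_{k\ge3}\IPI_k$ (resp.\ the inverse Jacobi multiplier $\Mt=\hpp+\cdots$) from Lemma \ref{algoritmoIPI} (resp.\ Lemma \ref{algoritmoJM}) and verify all obstruction coefficients $\alpha_k$ (resp.\ $\betat_k$) vanish. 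Since the nonlinearity $b_{200}x^2$ has zero $z$-component and $c_{030}y^3$ contributes only to the $z$-equation, a symmetry or structural argument — e.g.\ that $\diverg$ of the relevant reduced planar field (\ref{FNparecidoTakens}) with $G_1\equiv0$ is $G_2'(z)$, making $\facV=\rrr+G_2(\rrr)$... $=\rrr+G_2(\zzz)$ an inverse integrating factor exactly as in case {\bf (b)} of the proof of Theorem \ref{teHzCnsintMJ} — should close it: indeed when $G_1\equiv0$, system (\ref{FNparecidoTakens}) is $\dot\zzz=\rrr+G_2(\zzz)$, $\dot\rrr=0$, which is trivially integrable with first integral $\rrr$, hence by Proposition \ref{proHZTB} the normal form is formally integrable.

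The necessity direction ($a_{001}\neq0 \Rightarrow$ not integrable) follows from Theorem \ref{teHzinteg} once one checks that $a_{001}\neq0$ forces the normal form into none of the integrable cases: with $\lo=1$ and $a_1\neq0$, integrability by case {\bf (b.1)} would require $G_2\equiv0$, and by case {\bf (b.3)} would require $\mo=\lo=1$ together with the resonance $2n_1a_1+2n_2b_1=0$. So the remaining work is to compute $b_1$ (the first coefficient of $G_2(z)=b_1z^2+\cdots$) and to verify that either $b_1\ne0$ with $2a_1+2b_1\ne0$ for all admissible coprime pairs making the case {\bf (b.3)} resonance fail, or more simply that the $c_{030}y^3$ term, being degree $2$ (type $(1,1,2)$), cannot affect $G_2$ at degree $1$, so $b_1=0$, which already rules out {\bf (b.2)} and {\bf (b.3)} and leaves only {\bf (b.1)} — which then demands showing $G_2\equiv0$ is impossible, i.e.\ that some $b_m\ne0$; this is where the degree-$2$ nonlinearity $c_{030}y^3$ enters and must produce a nonzero $b_m$, and pushing this computation to the needed order (using the averaging/homological machinery of \cite{AlgHZNF}) is the main technical obstacle. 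I expect the computation itself to be short but to require care in tracking which parameters survive the normalization.
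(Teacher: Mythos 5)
The fatal gap is in the necessity direction. Your plan hinges on the guess that $b_1=0$ (``the $c_{030}y^3$ term \dots cannot affect $G_2$ at degree $1$''), so that the normal form would sit in a non-resonant situation and non-integrability would follow from exhibiting some $b_m\neq 0$. The actual computation gives $a_1=-3a_{001}^2/8$ and $b_1=3a_{001}^2/8$: for $a_{001}\neq 0$ both leading coefficients are nonzero and satisfy $a_1+b_1=0$, i.e.\ the system falls \emph{exactly} into the resonant case \textbf{(b.3)} of Proposition \ref{proCNintHZ} with $n_1=n_2=1$ (your fallback hope that the resonance $2n_1a_1+2n_2b_1=0$ fails is precisely what does not happen). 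Consequently no inspection of the leading normal-form coefficients can decide non-integrability: by Theorem \ref{teHzinteg}\textbf{(b.3)} one must decide whether the normal form terminates exactly at $\F_0+\F_1$, and the paper settles this with the inverse-Jacobi-multiplier machinery of Section \ref{ExistenceJacobi} (Theorem \ref{teHzCnsintMJ}, Lemma \ref{algoritmoJM}, Theorem \ref{hhh}(a)): one seeks $\M=\hpp^2+\cdots$, computes the obstruction coefficients, finds $\alpha_{14}$, then on its zero set $\alpha_{22}$, and finally a nonvanishing $\alpha_{30}$, which is what actually excludes a first integral for $a_{001}\neq 0$. Your proposal neither invokes this tool for the hard case nor carries out the high-order computation it would require (you yourself flag it as ``the main technical obstacle''), so the forward implication is not proved. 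Two further slips: under the type $\ttr=(1,1,2)$ the term $c_{030}y^3\,\partial_z$ is quasi-homogeneous of degree $1$, not $2$, and $a_1$ is proportional to $a_{001}^2$, not $a_{001}$ (harmless for $a_{001}\neq 0\Rightarrow a_1\neq 0$, but symptomatic of the grading confusion that led to $b_1=0$).

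The sufficiency direction you sketch is also more laborious than necessary and not closed: proving $G_1\equiv 0$ in the orbital normal form would require all-order information, and you never exhibit the promised integral $\IPI=\hpp+\cdots$. The paper's argument is immediate: when $a_{001}=0$ the $(x,y)$-subsystem decouples from $z$ and is divergence-free, so the Hamiltonian-type function $x^2+y^2+\tfrac{b_{200}}{3}x^3$ (stated as $x^2+y^2+b_{200}x^3$ in the paper) is an explicit analytic first integral of the full three-dimensional system, with no normal-form computation at all. This half of your plan is fillable, but as written it is incomplete, and the necessity half rests on an incorrect coefficient computation and a missing high-order obstruction argument.
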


\begin{proof}
A simple computation shows that the first coefficients of the formal orbital normal form (\ref{FNHZEquiv}) for system (\ref{sistF0F1v1}) are:
$$
 a_1 = - 3 \, a_{001}^2 / 8, \ b_1 = 3 \, a_{001}^2 / 8 .
 $$
 Then, two situations can arise:
\begin{description}
\item[(a)] 
If $a_{001}=0$, then system (\ref{sistF0F1v1}) admits the first integral $\IPI(x,y)=x^2+y^2+b_{200}x^3$. 
Moreover, $\M=\IPI$ is an inverse Jacobi multiplier of the form $\M = \hpp + \cdots$.
\item[(b)] 
If $a_{001}\neq 0$, from Theorem \ref{NFHopf-Zero-}, we obtain that $\F$ is orbitally equivalent to the vector field $\F_0+\F_{1}+\cdots$, where $\F_1 = \vedor{ a_{1}z \FZERO }{ b_{1} z^{2} }$ with $a_1$, $b_1$ given before.

As $a_1\neq 0$, $b_1\neq 0$ and $a_1+b_1=0$, from Theorem \ref{teHzCnsintMJ} we obtain that system (\ref{FNHZEquiv}) is integrable if, and only if, it admits an inverse integrating factor of the form $\M = \hpp^2 + \cdots$. 
From Lemma \ref{algoritmoJM}, this occurs if $\alpha_i=0$ for all $2i>4$. 

In this case, we have obtained $\alpha_{6}=\cdots=\alpha_{12}=0$, and 
$$
\alpha_{14}=\fracp{8}{3}a_{001}^8 \parent{126 \, a_{001}^2-117 \, b_{200}a_{001}+40 \, b_{200}^2}.
$$
The vanishing of $\alpha_{14}$ implies $126 \, a_{001}^2-117 \, b_{200}a_{001}+40 \, b_{200}^2 = 0$.
Under this hypothesis, we have obtained $\alpha_{16}=\cdots=\alpha_{20}=0$, and 
\bean 
\alpha_{22}&=&a_{001}^3 \left( 2560 ( 1814374881 \, b_{200} - 1620931148 \, a_{001} ) \, c_{030}^2 \right.\\
&&\left.\hspace{1.3cm} + ( 10742498602234 \, a_{001} - 6174238921423 \, b_{200} ) \, a_{001}^2 \right).
\eean
Assuming now that $ \alpha_{22}$ is zero, we have also obtained that $\alpha_{24}=\alpha_{26}=\alpha_{28}=0$, but $\alpha_{30}\neq 0$.
Hence, system (\ref{sistF0F1v1}) is not integrable in this case.
\end{description}
\end{proof}

The second family that we consider is the following:
\beq\label{sistF0F1v2} 
\vetre{\dot{x}}{\dot{y}}{\dot{z}} = \vetre{-2y}{2x}{x^2+y^2}+\vetre{a_{001}z}{0}{c_{101}xz+c_{011}yz}.
\eeq

Next theorem solves the integrability problem for the above family.

\begin{theorem}\label{teoaplicacion2} 
Let us assume that system (\ref{sistF0F1v2}) admits an analytical first integral. Then, one of the following conditions holds:
\begin{description}
\item[(i)] 
$a_{001}=0$.
\item[(ii)] 
$a_{001}\neq 0$, $a_{001}+c_{011}=0$, $c_{101}=0$.
\item[(iii)] 
$a_{001}\neq 0$, $a_{001}+2c_{011}=0$.
\end{description}
\end{theorem}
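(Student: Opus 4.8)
The plan is to follow the template of the proof of Theorem~\ref{teoaplicacion1}: compute the lowest-order coefficients of the orbital normal form~(\ref{FNHZEquiv}) for family~(\ref{sistF0F1v2}), use Theorem~\ref{teHzinteg} to list the only normal-form types compatible with the existence of a first integral, and then discard the remaining non-integrable possibilities with the algorithmic criteria of Theorems~\ref{ggg} and~\ref{hhh}.

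First I would carry out the orbital normalization of~(\ref{sistF0F1v2}) up to quasi-homogeneous degree two (through the homological equation associated with $\F_0$, or equivalently via the cylindrical reduction of Proposition~\ref{proHZTB}), obtaining the first coefficients $a_1$ and $b_1$ of~(\ref{FNHZEquiv}) as explicit quadratic polynomials in $(a_{001},c_{101},c_{011})$. Since $x^2+y^2$ is already a first integral of~(\ref{sistF0F1v2}) when $a_{001}=0$ (so that $G_1\equiv 0$ in that case), both $a_1$ and $b_1$ are divisible by $a_{001}$; the structural fact I would extract is that, for $a_{001}\neq 0$, one has $a_1=0$ if and only if $a_{001}+c_{011}=0$, and $b_1=0$ if and only if $a_{001}+2c_{011}=0$. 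In particular $a_1$ and $b_1$ cannot both vanish when $a_{001}\neq 0$.

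Now assume~(\ref{sistF0F1v2}) admits an analytical first integral. If $a_{001}=0$ then condition~\textbf{(i)} holds and there is nothing to prove, so suppose $a_{001}\neq 0$; by Theorem~\ref{teHzinteg} the orbital normal form is then of one of the types~\textbf{(a)}, \textbf{(b.1)}, \textbf{(b.2)}, \textbf{(b.3)}, and I would distinguish three mutually exclusive situations. \emph{If $b_1=0$} (so $a_1\neq 0$), then $a_{001}+2c_{011}=0$, which is exactly condition~\textbf{(iii)}, and nothing further is needed. \emph{If $a_1=0$} (so $b_1\neq 0$, hence $\mo=1<\lo$), Theorem~\ref{teHzinteg} forces the normal form to be of type~\textbf{(b.2)}, i.e.\ $G_1\equiv 0$; running the algorithm of Theorem~\ref{ggg} (equivalently item~\textbf{(b)} of Theorem~\ref{hhh}) along the line $c_{011}=-a_{001}$ I would compute the obstruction coefficients as polynomials in $(a_{001},c_{101})$ and show that their simultaneous vanishing forces $c_{101}=0$, which is condition~\textbf{(ii)}. \emph{If $a_1\neq 0$ and $b_1\neq 0$}, then $\lo=\mo=1$, and Theorem~\ref{teHzinteg}\,\textbf{(b.3)} provides coprime $n_1,n_2\in\N$ with $2n_1a_1+2n_2b_1=0$; substituting the formulas for $a_1,b_1$ confines $(a_{001},c_{011})$ to one line $c_{011}=\lambda_{n_1,n_2}a_{001}$ for each admissible pair. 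On each such line the normal form falls into case~\textbf{(b.3)}, so by Theorem~\ref{teHzCnsintMJ} and Theorem~\ref{hhh}\,\textbf{(a)} integrability is equivalent to the vanishing of all obstruction coefficients $\beta_k$ of Lemma~\ref{algoritmoJM}\,\textbf{(a)}; I would compute these (from the original system) and show that they cannot all vanish, so that this branch is empty.

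I expect the last situation to be the main obstacle, because the resonance lines $c_{011}=\lambda_{n_1,n_2}a_{001}$ form an infinite family, accumulating onto the line of case~\textbf{(iii)}, so all coprime pairs $(n_1,n_2)$ must be handled together. The natural way to do this is to eliminate $c_{011}$ in terms of $n_1,n_2,a_{001}$ and to compute the first obstruction $\beta_k$ of Lemma~\ref{algoritmoJM}\,\textbf{(a)} (or, alternatively, the next normal-form coefficients $a_2,b_2$) as a single rational expression in $(n_1,n_2,a_{001},c_{101})$: this should be a nonzero polynomial for all but finitely many pairs, and the finitely many exceptional pairs can then be dispatched by pushing the computation to higher degree, exactly as the chain $\alpha_{14},\alpha_{22},\ldots$ does in the proof of Theorem~\ref{teoaplicacion1}. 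A secondary difficulty of the same flavour appears in the case $a_1=0$: several successive vanishing conditions may be required before one reaches an obstruction coefficient that is a nonzero multiple of a power of $c_{101}$.
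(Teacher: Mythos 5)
Your plan follows the paper's own route essentially step by step: compute $a_1=-3a_{001}(a_{001}+c_{011})/8$, $b_1=3a_{001}(a_{001}+2c_{011})/8$, split according to which of them vanish to obtain items \textbf{(i)}--\textbf{(iii)}, and kill the branch $a_1b_1\neq 0$ by running the inverse-Jacobi-multiplier obstruction algorithm of Lemma \ref{algoritmoJM}/Theorem \ref{hhh} uniformly over the resonance lines $2n_1a_1+2n_2b_1=0$ (the paper does this by writing $a_{001}=q\,c_{011}$ and showing $\alpha_{12}$ forces $c_{101}=0$ while $\alpha_{14}$ and $\alpha_{18}$ cannot vanish simultaneously, which is exactly your ``one uniform expression in the resonance parameter'' strategy). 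The only cosmetic difference is that in the case $a_1=0$ you invoke the first-integral algorithm of Theorem \ref{ggg} where the paper uses the multiplier version, both of which are sanctioned by the text, so the proposal is correct and essentially identical in approach.
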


\begin{proof}
A simple computation shows that the first coefficients of the normal form (\ref{FNHZEquiv}) for system (\ref{sistF0F1v2}) are:
$$ 
a_1 = - 3 a_{001}(a_{001}+c_{011})/8 , \ b_1= 3 a_{001}(a_{001}+2c_{011})/8 .
$$

We consider the following situations:
\begin{description}
\item[(a)] 
If $a_{001}=0$, then system (\ref{sistF0F1v2}) admits the first integral $\IPI(x,y)=x^2+y^2$. 
Moreover, $\M=\IPI$ is an inverse Jacobi multiplier of the form $\M = \hpp + \cdots$. 
This situation corresponds to item \textbf{(i)} of the statement.

\item[(b)] 
If $a_{001}\neq 0$, from Theorem \ref{NFHopf-Zero-} we obtain that the vector field $\F$ is orbitally equivalente to $\F_0+\F_{1}+\cdots$, where $\F_1 = \vedor{ a_{1}z \FZERO }{ b_{1} z^{2} } $ and $a_1$, $b_1$ are given before.
The following sub-cases can arise:
\begin{description}
\item[(b1)] 
If $a_{001} \neq 0$, $a_{001} + c_{011} = 0$, then $a_1 = 0$, $b_1\neq 0$ and $a_1^2 + b_1^2 \neq 0$. 
From Theorem \ref{teHzCnsintMJ}, if system (\ref{sistF0F1v2}) admits some first integral then there exists an inverse Jacobi multiplier of the form $\M = \hpp + \cdots$. 
From Lemma \ref{algoritmoJM}, $\alpha_i $ must vanish for all $2i>2$. 

In this case, we have obtained $\alpha_4 = \alpha_6 = 0$ and $\alpha_{8} = a_{001}^5 c_{101}/32$, whose vanishing implies that $c_{101}=0$.
With this hypothesis, we have obtained $\alpha_{2j}=0$ for $j=2,\dots, 20$. 
This situation corresponds to item \textbf{(ii)} of the statement.

\item[(b2)] 
If $a_{001} \neq 0$,$a_{001} + 2 c_{011} = 0$, then $a_1 \neq 0$, $b_1= 0$ and $a_1^2 + b_1^2 \neq 0$. 
From Theorem \ref{teHzCnsintMJ}, if system (\ref{sistF0F1v2}) admits some first integral then there exists an inverse Jacobi multiplier of the form $\M = \hpp + \cdots$.

Again, from Lemma \ref{algoritmoJM} we obtain that $\alpha_{2j}$ must vanish for all $j\geq 2$.
We have obtained $\alpha_{2j}=0$ for $j=2,\dots, 18$ and we conjecture that system admits some analytic first integral. 
This situation corresponds to item \textbf{(iii)} of the statement.

\item[(b3)] 
If $a_{001} ( a_{001} + c_{011} ) ( a_{001} + 2 c_{011} ) \neq 0$, then $a_1 b_1 \neq 0$ and using Theorem \ref{teHzCnsintMJ}, we obtain that system (\ref{sistF0F1v2}) is integrable if, and only if, it admits some inverse integrating factor of the form $\M = \hpp^2 + \cdots$ and there are $n_1,n_2\in\N$ coprime such that $2 n_1 a_1 + 2 n_2 b_1 = 0$, i.e., $(n_2-n_1) a_{001} + (2n_2-n_1) c_{011} = 0$.

We notice that $a_{001} = q c_{011} \neq 0$ where $q = - \fracp{2n_2-n_1}{n_2-n_1}\in\Q$.
We claim that $q<-2$ or $q>-1$.
Namely, if $n_2 > n_1$ then $q = - 2 - \fracp{n_1}{n_2-n_1} < - 2$, whereas if $n_2 < n_1$ then $q = - 1 + \fracp{n_2}{n_1-n_2} > - 1$. 

Using this condition, from Lemma \ref{algoritmoJM} we obtain that if system (\ref{sistF0F1v2}) is integrable then $\alpha_{2i}$ must vanish for all $i>2$. 

We have obtained $\alpha_{12} = - c_{011}^7 \, c_{101} \, q^4 (q+2) (5q+8) / {1920}$·. 
We note that $c_{011} \neq 0$, $q \neq 0$, $q + 2 \neq 0$, $5 q + 8 \neq 0$ (otherwise, $n_1$, $n_2$ are not coprime).
Then, the vanishing of $\alpha_{12}$ occurs if, and only if, $c_{101}=0$. 

Under this hypothesis, we have obtained $\alpha_{6} =\alpha_{8} =\alpha_{10} =\alpha_{12} = 0$ and
\bean 
\alpha_{14} & = & c_{011}^{10} \, q^4 \, (3q+5) \, (q+2) \, (q+1) \, (84 q^3 + 201 q^2 + 129 q + 10)
/ 49152,\\
\alpha_{16}&=&0,\\
\alpha_{18}&=& c_{011}^{14} \, q^5 \, (q+2) \, (q+1) \, (308448 q^7 + 2217516 q^6 + 6661397 q^5 + 10859256 q^4 
\\
&&
+ 10383887 q^3 + 5761860 q^2 + 1604516 q + 111300) / {113246208}.
\eean
Using that $\alpha_{14}$ and $\alpha_{18}$ can not vanish simultaneously, we deduce that, in this situation, system (\ref{sistF0F1v2}) does not admit any first integral.
\end{description}
\end{description}
\end{proof}
\vspace{0.5truecm}

\noindent\textbf{Acknowledgements.}
{This research was partly supported by the
\emph{Ministerio de Ciencia e Innovaci\'on, fondos FEDER}
(project MTM2017-87915-C2-1-P),
 by the
\emph{Ministerio de Ciencia, Innovaci\'on y Universidades, fondos FEDER}
(project P6C2018-096265-B-I00)
 and by the \emph{{Consejer\'{\i}a de Econom\'{\i}a, Innovaci\'on, Ciencia y
Empleo} de la Junta de Andaluc\'{\i}a} (projects
{P12-FQM-1658}, TIC-130, FQM-276).
}

\end{document}